\newtheorem{theorem}{Theorem}[section]
\newtheorem{corollary}[theorem]{Corollary}
\newtheorem{lemma}[theorem]{Lemma}
\theoremstyle{definition}
\newtheorem{definition}[theorem]{Definition}
\newtheorem{remark}[theorem]{Remark}
\newtheorem{example}[theorem]{Example}
\newcommand{\R}{\mathbb{R}}
\newcommand{\N}{\mathbb{N}}
\newcommand{\D}{\partial}
\newcommand{\cl}[1]{\overline{#1}}
\newcommand{\dif}[1]{\,\mathrm{d}#1}
\newcommand{\ldif}[1]{\frac{\dif{\phantom{#1}}}{\dif{#1}}\,}
\newcommand{\wt}[1]{\widetilde{#1}}
\newcommand{\X}{\times}
\newcommand{\ee}{\mathrm{e}}
\newcommand{\f}{\mathfrak{f}}
\newcommand{\g}{\mathfrak{g}}
\def\overstrike#1#2{{\setbox0\hbox{$#2$}\hbox to \wd0{\hss
      $#1$\hss}\kern-\wd0\box0}}
\DeclareMathOperator{\sign}{\mathrm{sign}}
\title[Periodic perturbations]%: Periodic perturbations with delay of coupled ODEs]%
{Periodic perturbations of a class of scalar second order functional differential equations}
\author[A.\ Calamai]{Alessandro Calamai}
\address{Alessandro Calamai, 
Dipartimento di Ingegneria Civile, Edile e Architettura,
Universit\`{a} Politecnica delle Marche
Via Brecce Bianche
I-60131 Ancona, Italy}%
\email{calamai@dipmat.univpm.it}%
\author[M.P.\ Pera]{Maria Patrizia Pera}
\address{Maria Patrizia Pera,
Dipartimento di Matematica e Informatica ``Ulisse Dini'',
Universit\`a degli Studi di Firenze,
Via S.\ Marta 3, I-50139 Florence, Italy}%
\email{mpatrizia.pera@unifi.it}
\author[M.\ Spadini]{Marco Spadini}
\address{Marco Spadini,
Dipartimento di Matematica e Informatica ``Ulisse Dini'',
Universit\`a degli Studi di Firenze,
Via S.\ Marta 3, I-50139 Florence, Italy}%
\email{marco.spadini@unifi.it}
\keywords{Functional differential equations, Branches of periodic solutions, Linear chain trick, Brouwer degree.}
\subjclass[2010]{34K13,  34C25}
\thanks{The authors are members of the Gruppo Nazionale per l'Analisi Mate\-ma\-tica, la Probabilit\`a e le loro Applicazioni (GNAMPA) of the Istituto Nazionale di Alta Mate\-ma\-tica (INdAM)}
\begin{document}
\begin{abstract}
We study, by means of a topological approach, the forced oscillations of second order functional retarded differential equations subject to periodic perturbations. We consider a delay-type functional dependence involving a gamma probability distribution. By a linear chain trick we obtain a first order system of ODE's whose $T$-periodic solutions correspond to those of the functional equation.
\end{abstract}
% \nocite{*}
\maketitle

\section{Introduction and setting of the problem}
In this paper we investigate the forced oscillations of a class of second order functional retarded differential equations subject to periodic perturbations.
Namely, we study perturbations of \emph{scalar, second order functional differential equations} of the following form:
\begin{equation}\label{eq:nonpert}
 \ddot x(t) = g\left(x(t),\dot x(t), \int_{-\infty}^t\gamma_a^b(t-s) \varphi(x(s),\dot x(s))\dif{s}\right),
\end{equation}
where $g\colon \R^3\to\R$ is a continuous map and $\varphi\colon \R^2\to\R$ is locally Lipschitz.
The integral kernel $\gamma_a^b$, for $a>0$ and $b\in \N\setminus\{0\}$ is the gamma probability distribution
\begin{equation}\label{def:gamma}
 \gamma_a^b(s)=\frac{a^bs^{b-1}\ee^{-as}}{(b-1)!}\;\;\text{for $s\geq 0$},\qquad \gamma_a^b(s)=0\;\;\text{for $s< 0$,}
\end{equation}
with mean $b/a$ and variance $b/a^2$.

 The particular dependence on the past of the solution that is considered here is one that naturally arises in many contexts (not necessarily for second order equations) see, e.g., \cite{Bur2005,BuTr82,Far73,Far74,Rua2006,Smi2011}.  In the equation we examine, the delay is spread along the whole history but more concentrated at a given point in the past. We can interpret this, from a probabilistic point of view, as a delay that follows a gamma-type distribution with a given mean (and variance). This approach seems to be reasonable in contexts when the delay cannot be measured with precision, but its mean value and variance are known, or it is genuinely spread in time as, e.g., in \cite{CuRuWe00,CuRuWe03}.
Notice, in particular, that letting $a$ and $b$ tend to infinity in such a way that the quotient $r:=a/b$ remains constant (for instance, put $a=nr$ and $b=n$ and let $n\to\infty$), one concentrates the memory effect close to the delay $r$. Indeed, at least in the sense of distributions, \eqref{eq:nonpert} approximates the  second order delay differential equation
$$\ddot x(t) = g\left(x(t),\dot x(t),  \varphi\big(x(t-r),\dot x(t-r)\big)\right).$$

Observe that the function $\gamma_a^b$ defined in \eqref{def:gamma} is continuous for $b=2,3,\ldots$ but not for $b=1$. However, also in the latter case, assuming $x$ in $C^1$, the function $t\mapsto\int_{-\infty}^t\gamma_a^1(t-s) \varphi(x(s),\dot x(s))\dif{s}$ is continuous. Then the right hand side of \eqref{eq:nonpert} is continuous as well; consequently, any $C^1$ solution of \eqref{eq:nonpert} is actually of class $C^2$.
It is also worth noticing (and we will use this fact later) that, since in the integral one has $t-s\geq0$, then
\[
 \int_{-\infty}^t\gamma_a^1(t-s) \varphi(x(s),\dot x(s))\dif{s}=
 \int_{-\infty}^ta\ee^{-a(t-s)} \varphi(x(s),\dot x(s))\dif{s},
\]
so that the function $t\mapsto\int_{-\infty}^t\gamma_a^1(t-s) \varphi(x(s),\dot x(s))\dif{s}$ is actually in $C^1$.
  
\smallskip
Given $T>0$, we consider the following $T$-periodic perturbation of~\eqref{eq:nonpert}:
\begin{equation}\label{eq:ODEpert}
 \ddot x(t) = g\left(x(t),\dot x(t), \int_{-\infty}^t\gamma_a^b(t-s) \varphi(x(s),\dot x(s))\dif{s}\right)+\lambda f\big(t,x(t),\dot x(t)\big),
\end{equation}
in which we assume that the map $f\colon \R^3\to\R$ is continuous and $T$-periodic in the first variable, and $\lambda$ is a nonnegative real parameter. Our main purpose is to investigate the set of $T$-periodic solutions of \eqref{eq:ODEpert}.
Here, given $\lambda \geq 0$, by a \emph{$T$-periodic solution} (on $\R$) of \eqref{eq:ODEpert} we mean a $C^2$ function $x\colon\R\to \R$ of period $T$ that satisfies identically equality \eqref{eq:ODEpert}.

Roughly speaking, we explicitly construct a scalar function $\Phi$ whose change of sign implies the existence of a connected set, called a \emph{``branch''},  of nontrivial pairs $(\lambda, x)$ -- with $x$ a $T$-periodic solution of \eqref{eq:ODEpert} corresponding to $\lambda\in [0,\infty)$ -- that emanates out of the set of zeros of $\Phi$ and whose closure is not compact.

We also give sufficient conditions yielding the multiplicity of  $T$-periodic solutions of \eqref{eq:ODEpert} for $\lambda> 0$ small. Such conditions are essentially based upon the notion of \emph{ejecting set} (see, e.g.,~\cite{FPS00}).

The methods that we employ are topological in nature and are based on the Brouwer degree. A familiarity with this notion, however, is not necessary to use our main results since its application is restricted only to the proofs. Nevertheless, we provide a brief summary. 

Our results are so-to-speak dual to those of \cite{CaPeSp18}, 
where periodic perturbations containing delay terms are applied to 
scalar, second order ODE's.
The study, by means of topological methods, of the branching and multiplicity of periodic solutions of periodically perturbed equations, is now a well-investigated subject  in the case of ODE's both in Euclidean spaces and on manifolds
(see, e.g., \cite{FuPeSp11,FuSp09}).
The case of ODE's perturbed with delayed forcing terms is also studied in the literature, although not so broadly (see, e.g.,~\cite{CaPeSp18,CaPeSp19}).
However, the presence of delay terms in the \emph{unperturbed} equation \eqref{eq:nonpert} is peculiar of the problem addressed here. In spite of the apparent similarities, a different approach is called upon in order to manage the unperturbed equation.

In the undelayed case, that is for periodically forced second-order scalar autonomous ODE's, both
the problems of existence and multiplicity of periodic solutions are quite classical.
However, this is still the subject of active research by the mathematical community.
There are many approaches that have been successfully pursued to get multiplicity results: among the others let us mention here, e.g., the recent contributions \cite{BFG,BZ13,CaPeSp17,FZ},
the survey papers \cite{Maw09,Maw14} as well as the monograph~\cite{Fonda} and the references therein.

The strategy adopted in this paper is inspired to \cite{Spa22} and can be summarized as follows. First we construct a system of $b+2$, $T$-periodically perturbed first-order ordinary differential equations whose $T$-periodic solutions correspond to that of \eqref{eq:ODEpert}. We then use known results about these perturbed systems to ensure the existence of a branch as sought when the topological degree of the unperturbed field $G$ has nonzero degree and finally, we show, by homotopy techniques, that the change of sign of $\Phi$ imply that $G$ has nonzero degree. 

It should be noted that the result just described does not guarantee the existence of forced oscillations even for very small values of $\lambda$. For this reason, we also provide a nonlocal condition, based on an inequality of J.\ A.\ Yorke \cite{Yo69}, implying that the branch projects nontrivially onto $[0,\infty)$.

In order to develop a better understanding of the nature of the branch of $T$-periodic solutions and its relation with multiplicity results we discuss, following \cite{BiSp15} and \cite{Spa22}, a method to visualize, in finite dimension, a homemorphic set that retains all the relevant properties. We illustrate the procedure with a numeric example.

\section{The linear chain trick}\label{sect:lct}

As pointed out in the introduction, a crucial remark is that the $T$-periodic solutions of \eqref{eq:ODEpert} and those of the following system of $b+2$ ordinary differential equations correspond in some sense:
\begin{equation}\label{eq:expanded}
 \dot \xi = G(\xi)+\lambda F(t,\xi),
\end{equation}
where $\lambda \geq 0$, $\xi=(u, v_0,v_1,\ldots,v_b)\in \R^{b+2}$, and the maps
$G\colon \R^{b+2}\to\R^{b+2}$ and
$F\colon \R\times\R^{b+2}\to\R^{b+2}$ are respectively defined as
\begin{equation}\label{eq:vectorFieldG}
\begin{split}
G(u, v_0,v_1,\ldots,v_b)=\:&\\
\Big(v_0,g(u,v_0,v_b)&,a\big(\varphi(u,v_0)-v_1\big),a(v_1-v_2)\ldots,a(v_{b-1}-v_b)\Big)
\end{split}
\end{equation}
and
\[
 F(t,u, v_0,v_1,\ldots,v_b) = \Big(0,f(t,u,v_0),\underbrace{0,\ldots,0}_{\text{$b$ times}}\Big),
\]
with  $g$, $\varphi$ and $f$  as in equation \eqref{eq:ODEpert}.
Clearly, $G$ and $F$ are continuous maps. By a $T$-periodic solution (on $\R$) of system \eqref{eq:expanded} we mean a $C^1$ function $\xi\colon\R\to \R^{b+2}$ of period $T$ that satisfies \eqref{eq:expanded} identically. 

\medskip
In our main results we will be concerned with the Brouwer degree of the map~$G$
(see Section \ref{sect:degree} for details).
Our first task will be to derive a formula for the computation of the degree of $G$ in terms of the real-valued function $\Phi\colon \R\to\R$, given by 
\begin{equation}\label{eq:defphi}
\Phi(u)=g(u,0,\varphi(u,0)).
\end{equation}

\begin{remark}\label{rem:zero}
 Observe that if $(\bar u, v_0,v_1,\ldots,v_b)\in G^{-1}(0)$ then $\Phi(\bar u)=0$, $v_0=0$ and $v_1=v_2=\ldots=v_b=\varphi(\bar u,0)$. Conversely, for any $\bar u\in\Phi^{-1}(0)$, then $G(\bar u,0,\varphi(\bar u,0)\ldots,\varphi(\bar u,0))=0$.
\end{remark}

Given a bounded open interval $(\alpha,\beta)\subseteq \R$, define the open subset $W^*=(\alpha,\beta)\times\R^{b+1}$ of $\R^{b+2}$. We have:

\begin{theorem}\label{thm:degree}
 Suppose that $\Phi(\alpha)\cdot\Phi(\beta)<0$.
 Then $G$ is admissible for the degree in $W^*$ and we have $\deg(G,W^*)\neq 0$.
\end{theorem}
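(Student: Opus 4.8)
The plan is to dispatch admissibility directly from Remark~\ref{rem:zero}, then compute $\deg(G,W^*)$ by deforming $G$ --- through homotopies admissible in $W^*$ --- to an almost-product field whose degree follows from a short linear-algebra computation together with the one-dimensional degree of $\Phi$ on $(\alpha,\beta)$. For admissibility, Remark~\ref{rem:zero} identifies $G^{-1}(0)\cap\overline{W^*}$ with the set of points $(\bar u,0,\varphi(\bar u,0),\dots,\varphi(\bar u,0))$ with $\bar u\in\Phi^{-1}(0)\cap[\alpha,\beta]$. Since $\Phi(\alpha)\Phi(\beta)<0$ forces $\Phi(\alpha)\neq0\neq\Phi(\beta)$, no such $\bar u$ equals $\alpha$ or $\beta$; hence $G^{-1}(0)\cap\partial W^*=\emptyset$ and $G^{-1}(0)\cap W^*$, being the continuous image of the compact set $\Phi^{-1}(0)\cap[\alpha,\beta]\subset(\alpha,\beta)$, is compact in $W^*$. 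So $G$ is admissible in $W^*$ and, after the usual excision to a large ball containing $G^{-1}(0)$, $\deg(G,W^*)$ is well defined.

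Next I would connect $G$, within the class of fields admissible in $W^*$, to
\[
 G_0(u,v_0,v_1,\dots,v_b)=\big(v_0,\ \Phi(u),\ -av_1,\ a(v_1-v_2),\ \dots,\ a(v_{b-1}-v_b)\big)
\]
by two successive homotopies. The first leaves every component of $G$ fixed except the second, in which the second and third arguments of $g$ are replaced by $(1-\tau)v_0$ and $(1-\tau)v_b+\tau\varphi(u,0)$; the remaining $b+1$ components being untouched, at any zero of any stage one still reads (bottom up) $v_0=0$ and $v_1=\dots=v_b=\varphi(u,0)$, whence the second argument of $g$ is $0$ and its third argument is $(1-\tau)\varphi(u,0)+\tau\varphi(u,0)=\varphi(u,0)$, so the second component equals $g(u,0,\varphi(u,0))=\Phi(u)$ for every $\tau$; in particular the zero set is $\tau$-independent and compact in $W^*$, and at $\tau=1$ the second component is literally $\Phi(u)$. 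The second homotopy then leaves everything fixed except the third component, which becomes $a\big((1-\tau)\varphi(u,v_0)-v_1\big)$; at a zero one gets $v_0=0$, $\Phi(u)=0$, and $v_1=\dots=v_b=(1-\tau)\varphi(u,0)$, so again the zero set stays in a fixed compact subset of $W^*$ (using that $\varphi(\cdot,0)$ is bounded on $\Phi^{-1}(0)\cap[\alpha,\beta]$ and that the sign condition keeps $u$ away from $\alpha,\beta$), and at $\tau=1$ we have reached $G_0$. Homotopy invariance gives $\deg(G,W^*)=\deg(G_0,W^*)$.

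Finally, $G_0$ is, up to the transposition $\rho$ of its first two output coordinates --- a linear isomorphism with $\det\rho=-1$ --- the Cartesian product $\Phi\times\mathrm{id}_\R\times L$ of $(\alpha,\beta)\times\R\times\R^b$ into $\R^{b+2}$, where $L(v_1,\dots,v_b)=(-av_1,a(v_1-v_2),\dots,a(v_{b-1}-v_b))$ is lower bidiagonal with $\det L=(-a)^b$, hence $\deg(L,\R^b)=(-1)^b$ because $a>0$. Using that the Brouwer degree is multiplicative under Cartesian products and is multiplied by $\sign(\det A)$ under post-composition with a linear isomorphism $A$,
\[
 \deg(G,W^*)=\deg(G_0,W^*)=\sign(\det\rho)\,\deg\big(\Phi,(\alpha,\beta)\big)\,\deg(\mathrm{id}_\R,\R)\,\deg(L,\R^b)=(-1)^{b+1}\deg\big(\Phi,(\alpha,\beta)\big),
\]
and $\deg(\Phi,(\alpha,\beta))=\tfrac12\big(\sign\Phi(\beta)-\sign\Phi(\alpha)\big)=\pm1$ since $\Phi(\alpha)\Phi(\beta)<0$. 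Therefore $\deg(G,W^*)\neq0$; in fact this computation delivers the degree formula promised in the introduction, not merely its nonvanishing.

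I expect the only real obstacle to be the bookkeeping in the homotopy step: the couplings along the ``chain'' must be prevented from conspiring to push the zero set through $\partial W^*$ or off to infinity as $\tau$ runs over $[0,1]$ --- a single naive homotopy deforming the $g$-arguments and the $\varphi$-term simultaneously does \emph{not} have this property, which is exactly why I would split the deformation into the two stages above. Everything else (admissibility, the evaluation of $\det L$, the product and composition rules for the Brouwer degree, and the convention fixing the degree on the unbounded set $W^*$) is routine.
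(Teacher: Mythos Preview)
Your argument is correct and yields the same degree formula $(-1)^{b+1}\deg\big(\Phi,(\alpha,\beta)\big)=(-1)^{b-1}\deg\big(\Phi,(\alpha,\beta)\big)$ as the paper, but the route is genuinely different. The paper introduces a single auxiliary map $\mathcal G$ obtained from $G$ by replacing $v_1$ with $v_b$ in the third component, shows by one homotopy that $\deg(G,W^*)=\deg(\mathcal G,W^*)$ (the zero sets coincide along the deformation), and then evaluates $\deg(\mathcal G,W^*)$ by a Jacobian computation at nondegenerate zeros --- invoking an approximation argument to reduce to the case where all zeros of $\Phi$ in $(\alpha,\beta)$ are nondegenerate. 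You instead deform $G$ in two stages all the way down to a product field $G_0=\rho^{-1}\circ(\Phi\times\mathrm{id}_\R\times L)$ and read off the degree from the multiplicativity of Brouwer degree together with the sign of the bidiagonal determinant. Your approach trades the Jacobian/Laplace expansion and the nondegeneracy approximation for a second homotopy and the product formula; it is arguably cleaner in that it never needs to perturb $\Phi$ or assume smoothness of $g$ and $\varphi$ beyond continuity, while the paper's approach keeps the homotopy bookkeeping to a minimum at the cost of the approximation step. Both arguments rest on the same structural observation (Remark~\ref{rem:zero}) that pins down the zero set of every intermediate field.
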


The proof  of Theorem \ref{thm:degree} is postponed to Section \ref{sect:degree}.

\medskip

Let us show now how a ``linear chain trick'' 
(see, e.g., \cite{Smi2011,Spa22}) can be used to prove the correspondence between $T$-periodic solutions of the second order equation~\eqref{eq:ODEpert} and of the first order system \eqref{eq:expanded}.
 
Let us introduce some notation. By $C_T^n(\R^s)$, $n=0,1,2$, we will denote the Banach space of the $T$-periodic $C^n$ maps $x\colon\R\to\R^s$  with the  the standard norm
\[
\|x\|_{C^n} = \sum_{i=0}^n\max_{t\in\R}|x^{(i)}(t)|.
\]
Here, $x^{(i)}$ denotes the $i$-th derivative of $x$, in particular $x^{(0)}$ coincides with $x$.

%Observe that any $\R^s$-valued, $C^n$, $T$-periodic function is necessarily \hbox{$C^n$-bounded,} hence it belongs to $C_T^n(\R^s)$.

\begin{theorem}\label{thm:equivSol2}
Suppose $x_0$ is a $T$-periodic solution of \eqref{eq:ODEpert}, and let
\begin{equation*}
\left\{\begin{array}{l}
 y_0(t):= \dot x_0(t),\\
  y_i(t):=\int_{-\infty}^t \gamma_a^i(t-s)\varphi(x_0(s),y_0(s))\dif{s},\quad i=1,\ldots,b
 \end{array}\right.
\end{equation*}
for $t \in \R$. Then, $(x_0,y_0,y_1\ldots,y_b)$ is a $T$-periodic solution of \eqref{eq:expanded}. 
\end{theorem}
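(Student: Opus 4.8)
The plan is to verify directly, component by component, that $\xi_0:=(x_0,y_0,y_1,\ldots,y_b)$ is a solution of \eqref{eq:expanded}; the real content is the ``linear chain trick'' identity expressing $\dot y_i$ in terms of $y_{i-1}$ and $y_i$.

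First I would check that the $y_i$ are well defined, continuous and $T$-periodic. Put $h(s):=\varphi\big(x_0(s),\dot x_0(s)\big)$. Since $x_0\in C_T^2(\R)$, both $x_0$ and $\dot x_0=y_0$ are continuous and $T$-periodic, hence bounded, so $h$ is continuous, $T$-periodic and bounded, say $|h|\le C$. Each $\gamma_a^i$ is a probability density, so $\int_{-\infty}^t\gamma_a^i(t-s)\dif{s}=1$ and the integral defining $y_i$ converges absolutely with $|y_i|\le C$; a routine dominated-convergence argument, using that $\gamma_a^i$ is smooth on $(0,\infty)$ with exponential decay, gives continuity of $y_i$ (for $i=1$ this is already noted in the Introduction). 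Finally, the substitution $s\mapsto s+T$ in the integral together with the $T$-periodicity of $h$ yields $y_i(t+T)=y_i(t)$.

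Next I would take care of the first two scalar equations of \eqref{eq:expanded}. The equation $\dot u=v_0$ reads $\dot x_0=y_0$, which is the definition of $y_0$. For the second slot, $y_b(t)$ is precisely the memory term appearing in \eqref{eq:ODEpert}, so $\dot y_0=\ddot x_0$ equals $g\big(x_0(t),y_0(t),y_b(t)\big)+\lambda f\big(t,x_0(t),y_0(t)\big)$, which is exactly the second component of $G+\lambda F$ evaluated at $\xi_0$, since $F$ carries $f(t,u,v_0)$ in that slot and zeros elsewhere.

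The heart of the proof is the chain $\dot v_1=a\big(\varphi(u,v_0)-v_1\big)$ and $\dot v_i=a(v_{i-1}-v_i)$ for $i=2,\ldots,b$. Differentiating \eqref{def:gamma} gives the elementary identity $(\gamma_a^i)'(s)=a\big(\gamma_a^{i-1}(s)-\gamma_a^i(s)\big)$ for $i\ge2$ and $s>0$, while $\gamma_a^i(0)=0$ for $i\ge2$. Hence, differentiating $y_i(t)=\int_{-\infty}^t\gamma_a^i(t-s)h(s)\dif{s}$ under the integral sign — legitimate because $\gamma_a^i$ and $(\gamma_a^i)'$ decay exponentially and $h$ is bounded — the boundary term $\gamma_a^i(0)h(t)$ vanishes and one is left with $\dot y_i(t)=a\big(y_{i-1}(t)-y_i(t)\big)$ for $i=2,\ldots,b$. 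The case $i=1$ must be handled separately, because $\gamma_a^1$ jumps at $0$: using the form $y_1(t)=a\,\ee^{-at}\int_{-\infty}^t\ee^{as}h(s)\dif{s}$ recorded in the Introduction, a direct differentiation gives $\dot y_1(t)=a\,h(t)-a\,y_1(t)=a\big(\varphi(x_0(t),y_0(t))-y_1(t)\big)$. This low-tech point — correctly accounting for the discontinuity of $\gamma_a^1$ and justifying differentiation under an integral over an unbounded interval — is the only genuinely delicate step; the rest is bookkeeping. Once all $b+2$ equations are verified, each $\dot y_i$ has been exhibited as a continuous $T$-periodic function, so $\xi_0\in C_T^1(\R^{b+2})$ and it solves \eqref{eq:expanded}, which completes the argument.
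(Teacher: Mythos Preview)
Your proof is correct and follows essentially the same route as the paper: both verify the $b+2$ equations component by component, use the identity $(\gamma_a^i)'=a(\gamma_a^{i-1}-\gamma_a^i)$ for $i\ge2$ with the vanishing boundary term $\gamma_a^i(0)=0$, treat $i=1$ separately via the explicit exponential form, and establish $T$-periodicity by the substitution $s\mapsto s+T$. Your write-up is in fact a bit more explicit about the analytic justifications (absolute convergence, dominated convergence, exponential decay) than the paper's.
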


\begin{proof}
First notice that, since $x_0\in C_T^2(\R)$ and $\varphi\colon\R^2\to\R$ is continuous (we are actually assuming it to be locally Lipschitz), it is not difficult to prove that the functions $y_i$, $i=0,\ldots,b$, are in $C_T^1(\R)$.

Moreover, we claim that, for any $t \in \R$,
\begin{equation*}
\left\{\begin{array}{l}
 \dot x_0(t)= y_0(t),\\
 \dot y_0(t)= g\big(x_0(t),y_0(t),y_b(t)\big)+\lambda f\big(t,x_0(t),y_0(t)\big),\quad \lambda\geq 0,\\
 \dot y_1(t)= a\big(\varphi(x_0(t),y_0(t))-y_1(t)), \\
 \dot y_i(t)=a\big(y_{i-1}(t)-y_i(t)\big),\quad  i=2,\ldots,b.
 \end{array}\right.
\end{equation*}
Indeed, the first equality follows by definition.
Since $x_0$ is a solution of \eqref{eq:ODEpert}, for $ \lambda\geq 0$ we have
\begin{align*}
 \dot y_0(t) & = \ddot x_0(t)= g\left(x_0(t),\dot x_0(t), \int_{-\infty}^t\gamma_a^b(t-s) \varphi(x_0(s),\dot x_0(s))\dif{s}\right)+\lambda f\big(t,x_0(t),\dot x_0(t)\big) \\
& =g\big(x_0(t),y_0(t),y_b(t)\big)+\lambda f\big(t,x_0(t)y_0(t)\big), \quad \forall t \in \R. \end{align*}
Now observe that, when $i=1$, we have for any $t \in \R$, \[
 y_1(t):=\int_{-\infty}^t \gamma_a^1(t-s)\varphi(x_0(s),y_0(s))\dif{s}
 =\int_{-\infty}^t a\ee^{a(t-s)}\varphi(x_0(s),y_0(s))\dif{s}.
\]
Thus, taking the derivative under the integral sign,
\[
   \ldif{t}y_1(t) = \varphi(x_0(t),y_0(t))-a\int_{-\infty}^t \ee^{a(t-s)}\,\varphi(x_0(s),y_0(s))\dif{s}
\]
so that
\[
 \dot y_1(t)= a\big(\varphi(x_0(t),y_0(t))-y_1(t)\big).
\]
Also, for $i=2,\ldots,b$,  we get for any $t \in \R$,
  \begin{align*}
   \ldif{t}y_i(t) &= \gamma_a^i(0)\,\varphi(x_0(t),y_0(t))+\int_{-\infty}^t \ldif{t}\gamma_a^i(t-s)\,\varphi(x_0(s),y_0(s))\dif{s}\\
   &=\int_{-\infty}^t a\left(\gamma_a^{i-1}(t-s)-\gamma_a^i(t-s)\right)\,\varphi(x_0(s),y_0(s))\dif{s}\\
   &=a\left(\int_{-\infty}^t \gamma_a^{i-1}(t-s)\,\varphi(x_0(s),y_0(s))\dif{s}-\int_{-\infty}^t \gamma_a^i(t-s)\,\varphi(x_0(s),y_0(s))\dif{s}\right)\\
   &=a\left(y_{i-1}(t)-y_i(t)\right)
     \end{align*}
proving our claim.

Finally, to see that $(x_0,y_0,y_1\ldots,y_b)$ is $T$-periodic, observe that so are $x_0$ and $y_0$, and with the change of variable $\sigma=s-T$ we get
\begin{align*}
 y_i(t+T)=&\int_{-\infty}^{t+T} \gamma_a^i(T+t-s)\varphi(x_0(s),y_0(s))\dif{s}=\\&\int_{-\infty}^{t} \gamma_a^i(t-\sigma)\varphi(x_0(\sigma+T),y_0(\sigma+T))\dif{\sigma}
 =\\&\int_{-\infty}^{t} \gamma_a^i(t-\sigma)\varphi(x_0(\sigma),y_0(\sigma))\dif{\sigma}=y_i(t),
\end{align*}
for $i=1,\ldots,b$ and any $t \in \R$, and this completes the proof.
\end{proof}

Conversely, we have the following:
\begin{theorem}\label{thm:equivSol1}
 Suppose that $(x_0,y_0,y_1\ldots,y_b)$ is a $T$-periodic solution of \eqref{eq:expanded}, then $x_0$ is a $T$-periodic solution of \eqref{eq:ODEpert}. 
\end{theorem}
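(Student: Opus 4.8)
The plan is to reverse the construction of Theorem \ref{thm:equivSol2}, replacing the explicit substitution there by a uniqueness argument for the linear ``chain'' equations combined with $T$-periodicity.

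First I would read off what the hypothesis gives directly. Writing the solution as $\xi=(x_0,y_0,y_1,\ldots,y_b)$, the components of \eqref{eq:expanded} are $\dot x_0=y_0$, $\dot y_0=g(x_0,y_0,y_b)+\lambda f(t,x_0,y_0)$, $\dot y_1=a(\varphi(x_0,y_0)-y_1)$, and $\dot y_i=a(y_{i-1}-y_i)$ for $i=2,\ldots,b$. Since $\xi$ is $C^1$, we have $x_0\in C_T^1(\R)$ and $\dot x_0=y_0\in C_T^1(\R)$, hence $x_0\in C_T^2(\R)$; substituting $y_0=\dot x_0$ in the second equation yields
\[
\ddot x_0(t)=g\big(x_0(t),\dot x_0(t),y_b(t)\big)+\lambda f\big(t,x_0(t),\dot x_0(t)\big),\qquad t\in\R .
\]
So it only remains to identify $y_b(t)$ with $\int_{-\infty}^t\gamma_a^b(t-s)\varphi(x_0(s),\dot x_0(s))\dif s$.

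To this end I would introduce, for $i=1,\ldots,b$, the auxiliary functions $z_i(t):=\int_{-\infty}^t\gamma_a^i(t-s)\varphi(x_0(s),\dot x_0(s))\dif s$. Since $x_0\in C_T^2(\R)$ and $\varphi$ is continuous (in fact locally Lipschitz), the arguments in the proof of Theorem \ref{thm:equivSol2} apply verbatim — with $y_0$ there playing the role of $\dot x_0$ — and show that each $z_i$ is well defined, lies in $C_T^1(\R)$, is $T$-periodic, and satisfies $\dot z_1=a(\varphi(x_0,\dot x_0)-z_1)$ and $\dot z_i=a(z_{i-1}-z_i)$ for $i=2,\ldots,b$. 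Then, setting $w_i:=y_i-z_i$ and using $\dot x_0=y_0$, subtraction of the two chains gives $\dot w_1=-a w_1$ and $\dot w_i=a(w_{i-1}-w_i)$ for $i\geq2$. Each $w_i$ is $T$-periodic, being a difference of $T$-periodic functions; but a $T$-periodic solution of $\dot w_1=-a w_1$ with $a>0$ is necessarily $\equiv0$ (its only solutions are $c\,\ee^{-at}$, which are periodic only for $c=0$), so $w_1\equiv0$, and inductively $\dot w_i=-a w_i$ forces $w_i\equiv0$ for all $i$. In particular $y_b=z_b$, and inserting this into the displayed identity for $\ddot x_0$ shows that $x_0$ satisfies \eqref{eq:ODEpert}; being $C^2$ and $T$-periodic, $x_0$ is thus a $T$-periodic solution of \eqref{eq:ODEpert}.

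I do not expect a serious obstacle here: the only delicate points — convergence of the integrals defining the $z_i$ (via boundedness of $\varphi(x_0(\cdot),\dot x_0(\cdot))$ and $\gamma_a^i\in L^1(0,\infty)$), differentiation under the integral sign, and the $C^1_T$-regularity and $T$-periodicity of the $z_i$ — are precisely those already settled in the proof of Theorem \ref{thm:equivSol2}, so they can simply be invoked rather than re-derived. The genuinely new ingredient is the short linear-ODE/periodicity argument forcing $w_i\equiv0$.
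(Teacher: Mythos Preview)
Your proof is correct, and the overall architecture matches the paper's: both identify $y_b$ with the convolution $\int_{-\infty}^t\gamma_a^b(t-s)\varphi(x_0(s),\dot x_0(s))\dif s$ by a uniqueness argument for the linear chain, then substitute into the $\dot y_0$ equation. The difference lies in how uniqueness is obtained. The paper invokes an external lemma (Lemma~\ref{lem:unicsol}, taken from \cite{Spa22}) asserting that the chain $\dot z_i=a(z_{i-1}-z_i)$ has a \emph{unique} solution that is bounded in the $C^1$ norm, namely the convolution formula; since a $T$-periodic solution is automatically bounded, this forces $y_i=z_i$. You instead subtract the two chains and observe directly that each difference $w_i$ satisfies $\dot w_i=-aw_i$ once $w_{i-1}\equiv 0$, and that the only $T$-periodic solution of this scalar equation is zero. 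Your route is more elementary and fully self-contained --- it avoids citing the external lemma and exploits periodicity rather than mere boundedness --- while the paper's route is marginally more general (it would cover bounded, not necessarily periodic, solutions) at the cost of importing a result proved elsewhere.
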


The proof of Theorem \ref{thm:equivSol1} is based on the following technical lemma on linear systems of ODEs, see \cite[Lemma 3.3]{Spa22},
cf.\ also \cite[Prop.\ 7.3]{Smi2011}.
The proof is omitted.

\begin{lemma}[\cite{Spa22}, Lemma 3.3]\label{lem:unicsol}
Given any continuous and bounded function 
$z_0\colon\R\to\R$ 
and any nonzero number $a$, 
there exists a unique $C^1$ solution $z=(z_1,\dots,z_b), \, z_i \colon \R\to \R, \, i=1,\ldots,b,$
 of the system in $\R^{b}$
 \begin{equation*}\label{eq:Lsystem}
  \dot z_i(t)=a\big(z_{i-1}(t)-z_i(t)\big),
 \end{equation*}
which is bounded in the $C^1$ norm. This solution is given by
 \begin{equation*}\label{eq:defy}
  z_i(t)=\int_{-\infty}^t \gamma_a^i(t-s)z_0(s)\dif{s},\quad i=1,\ldots,b.
 \end{equation*}
\end{lemma}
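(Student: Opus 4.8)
The plan is to prove the two assertions separately: first, that the functions given by the convolution formula do form a bounded $C^1$ solution of the cascade; and second, that no other bounded $C^1$ solution exists. The crucial structural feature to exploit is that the system
\[
\dot z_i(t)=a\big(z_{i-1}(t)-z_i(t)\big),\qquad i=1,\ldots,b,
\]
is linear with a \emph{triangular} (cascade) structure: the $i$-th equation couples $z_i$ only to the quantity $z_{i-1}$ produced at the previous stage, with $z_0$ playing the role of an external datum. This is exactly what makes both an explicit solution formula and a recursive uniqueness argument available.

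For existence, I would set $z_i(t):=\int_{-\infty}^t \gamma_a^i(t-s)z_0(s)\dif{s}$ and verify three points. First, well-definedness and boundedness: changing variables to $\tau=t-s$ and using that $a>0$ makes $\gamma_a^i$ a probability density on $[0,\infty)$, one obtains $|z_i(t)|\le \|z_0\|_\infty\int_0^\infty\gamma_a^i(\tau)\dif{\tau}=\|z_0\|_\infty$, so each $z_i$ is bounded. Second, that $z_i$ solves the system: this is precisely the computation already carried out in the proof of Theorem~\ref{thm:equivSol2}, now with a generic bounded $z_0$ in place of $\varphi(x_0,y_0)$. Differentiating under the integral sign, the calculation is driven by the boundary values $\gamma_a^1(0)=a$ and $\gamma_a^i(0)=0$ for $i\ge 2$, together with the kernel identity $\ldif{\tau}\gamma_a^i(\tau)=a\big(\gamma_a^{i-1}(\tau)-\gamma_a^i(\tau)\big)$; these produce the forcing term $a\,z_0(t)$ at the first stage and the clean recursion $\dot z_i=a(z_{i-1}-z_i)$ thereafter. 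Third, boundedness in the $C^1$ norm then follows at once, since each $\dot z_i=a(z_{i-1}-z_i)$ is a bounded continuous function.

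For uniqueness, let $z$ and $\t z$ be two bounded $C^1$ solutions associated with the same datum $z_0$, and set $w_i:=z_i-\t{z}_i$. Then $w=(w_1,\ldots,w_b)$ solves the homogeneous cascade, i.e.\ the same system with $z_0$ replaced by $0$. I would argue by induction on $i$. The first equation reads $\dot w_1=-a\,w_1$, whose solutions are $w_1(t)=w_1(0)\ee^{-at}$; since $a\neq 0$, this function is unbounded on $\R$ unless $w_1\equiv 0$. Assuming $w_{i-1}\equiv 0$, the $i$-th equation reduces to $\dot w_i=-a\,w_i$, and the same reasoning forces $w_i\equiv 0$. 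Hence $z=\t z$, so the bounded solution is unique and must coincide with the one given by the formula.

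The step I expect to require the most care is the justification of differentiation under the integral sign in the second point: the integral is improper at the lower limit $-\infty$ and the variable $t$ appears both in the integrand and in the upper endpoint. The standard remedy applies — the exponential decay of $\gamma_a^i(t-s)$ as $s\to-\infty$ (valid because $a>0$) together with the boundedness of $z_0$ furnishes an integrable dominating bound, legitimizing the Leibniz rule and the interchange of differentiation and integration, uniformly for $t$ in compact sets. It is worth noting that the explicit convolution formula genuinely relies on $a>0$, so that $\gamma_a^i$ is integrable; the uniqueness argument above, by contrast, uses only $a\neq 0$.
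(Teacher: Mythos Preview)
The paper does not prove this lemma: immediately after its statement the authors write ``The proof is omitted'' and refer to \cite[Lemma~3.3]{Spa22}. There is therefore no in-paper argument to compare your proposal against.

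That said, your proof is correct and is the natural one. The existence half reuses the derivative computation already displayed in the proof of Theorem~\ref{thm:equivSol2} (with a generic bounded $z_0$ in place of $\varphi(x_0,y_0)$), together with $\int_0^\infty\gamma_a^i=1$ for the bound. The uniqueness half exploits the triangular structure exactly as it should: the difference $w$ of two bounded solutions satisfies $\dot w_1=-aw_1$, forcing $w_1\equiv 0$ by boundedness on all of $\R$, and then inductively $w_i\equiv 0$. Your closing observation that the convolution formula needs $a>0$ (so that $\gamma_a^i$ is integrable) while the uniqueness argument needs only $a\neq 0$ is accurate and clarifies a slight looseness in the lemma's hypothesis; in this paper $a>0$ throughout by the definition~\eqref{def:gamma}.
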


\begin{remark}
Observe in particular that, for $i=1$ in the previous lemma, one has 
\[
 z_1(t)=\int_{-\infty}^t \gamma_a^1(t-s)z_0(s)\dif{s}=
 a\int_{-\infty}^t \ee^{a(t-s)}z_0(s)\dif{s},
\]
so that $z_1$ is actually a $C^1$ function, and thus so are all the $z_i$'s for $i=2,\ldots,b$.
\end{remark}

\begin{proof}[Proof of Theorem \ref{thm:equivSol1}]
Let $(x_0,y_0,y_1\ldots,y_b)$ be a $T$-periodic solution of~\eqref{eq:expanded}, and define $z_0(t)=\varphi(x_0(t),y_0(t))$ for $t\in\R$. Observe that $z_0$ is bounded and (at least)  continuous.
Thus, by Lemma \ref{lem:unicsol}, 
   \begin{equation*}
   y_i(t)=\int_{-\infty}^t \gamma_a^i(t-s)\varphi(x_0(s),y_0(s))\dif{s}\quad i=1,\ldots,b,
 \end{equation*}
 is the unique solution of class $C^1$ of
\begin{equation*}
\left\{\begin{array}{l}
 \dot y_1(t)= a\big(\varphi(x_0(t),y_0(t))-y_1(t)), \\
 \dot y_i(t)=a\big(y_{i-1}(t)-y_i(t)\big),\quad  i=2,\ldots,b.
 \end{array}\right.
\end{equation*}
In particular, we have 
 \[
   y_b(t)=\int_{-\infty}^t \gamma_a^b(t-s)\varphi(x_0(s),y_0(s))\dif{s}.
 \]
Thus, from \eqref{eq:expanded},
\begin{multline*}
 \ddot x_0(t)=\dot y_0(t)=g\big(x_0(t),y_0(t),y_b(t)\big)+\lambda f\big(t,x_0(t)y_0(t)\big)=\\
            =g\left(x_0(t),\dot x_0(t), \int_{-\infty}^t\gamma_a^b(t-s) \varphi(x_0(s),\dot x_0(s))\dif{s}\right)+\lambda f\big(t,x_0(t),\dot x_0(t)\big),\quad \lambda\geq 0,
\end{multline*}
for all $t\in\R$, whence the assertion.
\end{proof}

\section{Branches of $T$-pairs}\label{sect:branches}

This section investigates the structure of the set of $T$-periodic solutions of \eqref{eq:ODEpert}. We begin by recalling some notation and basic facts.

Consider the following first-order parameterized ODE on $\R^k$:
\begin{equation}\label{eq:def-tpair}
 \dot x(t)=\g\big(x(t)\big)+\lambda\f\big(t,x(t)\big),
\end{equation}
where $\lambda \geq 0$, the maps
 $\g\colon \R^k\to\R^k$ and
$\f\colon \R\times\R^k\to\R^k$ are continuous
and $\f$ is $T$-periodic in the first variable.

We say that a pair $(\lambda,x)\in[0,\infty)\times C_T(\R^k)$ is a \emph{$T$-pair} (for \eqref{eq:def-tpair}) if $x$ is a $T$-periodic solution of \eqref{eq:def-tpair} corresponding to $\lambda$. If $\lambda=0$ and $x$ is constant then the $T$-pair is said \emph{trivial}. It is not hard to see that the trivial $T$-periodic pairs of \eqref{eq:def-tpair} correspond to the zeros of $\g$. 
{}From now on, given any $p \in \R^k$, we will denote by $\cl p$ the constant map $t \mapsto p$, $t \in \R$.
Given an open subset $\mathcal{O}$ of $[0,\infty)\X C_T(\R^k)$, we will denote by $\widetilde{\mathcal{O}} \subseteq \R^k$ the open set
$\widetilde{\mathcal{O}} := \big\{p:(0,\cl{p})\in\mathcal{O}\big\}$.

We have the following fact concerning the $T$-pairs of \eqref{eq:def-tpair}:

\begin{theorem}[\cite{FuSp97}]\label{thm:ramivecchio}
Let $\mathcal{O}$ be  open in $[0,\infty)\X C_T(\R^k)$, and assume that $\deg(\g,\widetilde{\mathcal{O}})$ is well defined and nonzero. Then there exists a connected set $\Gamma$ of nontrivial $T$-pairs in $[0,\infty)\X C_T(\R^k)$ whose closure in $\mathcal{O}$ is not compact and meets the set of trivial $T$-pairs contained in $\mathcal{O}$, namely the set: $\big\{(0,\cl{p})\in\mathcal{O}:\g(p)=0\big\}$.
\end{theorem}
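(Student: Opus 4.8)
The plan is to recast the problem as a fixed-point equation in the Banach space $C_T(\R^k)$ and then deploy the degree-theoretic global continuation machinery. First I would introduce the standard completely continuous reformulation: using the mean-value projector on $C_T(\R^k)$ together with the right inverse of $\frac{d}{dt}$ on the subspace of zero-mean $T$-periodic maps, one builds a map $\Theta\colon[0,\infty)\X C_T(\R^k)\to C_T(\R^k)$, continuous and compact on bounded sets (complete continuity follows from Ascoli--Arzel\`a, since $\Theta$ produces $C^1$ functions with equibounded derivatives on bounded sets), whose fixed points in the second variable are exactly the $T$-periodic solutions of \eqref{eq:def-tpair}. Thus the $T$-pairs of \eqref{eq:def-tpair} are precisely the solutions $(\lambda,x)$ of $x=\Theta(\lambda,x)$. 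A direct computation shows that at $\lambda=0$ a constant loop $\cl p$ satisfies $\Theta(0,\cl p)=\cl{p+\g(p)}$, so $\cl p$ is a fixed point of $\Theta(0,\cdot)$ if and only if $\g(p)=0$; hence the trivial $T$-pairs in $\mathcal O$ correspond bijectively to the zeros of $\g$ in $\widetilde{\mathcal O}$.

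Next I would convert the finite-dimensional hypothesis $\deg(\g,\widetilde{\mathcal O})\neq 0$ into a nonvanishing fixed-point index at the trivial set. The key point is a finite-dimensional reduction: restricting to $\lambda=0$, every fixed point of $\Theta(0,\cdot)$ lying over $\widetilde{\mathcal O}$ is a constant loop, and on the subspace of constants $I-\Theta(0,\cdot)$ acts as $p\mapsto-\g(p)$. Using excision to localize near the constants, the reduction property of the Leray--Schauder degree, and homotopy invariance to handle the oscillatory directions, one gets $\deg_{LS}\big(I-\Theta(0,\cdot),\mathcal O_0\big)=(-1)^k\deg(\g,\widetilde{\mathcal O})$ up to a fixed sign, where $\mathcal O_0=\{x:(0,x)\in\mathcal O\}$; in particular it is nonzero.

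Then comes the global continuation. Let $\Sigma\subseteq[0,\infty)\X C_T(\R^k)$ be the closed set of all $T$-pairs, and let $\Sigma_0$ be the set of trivial $T$-pairs in $\mathcal O$. Arguing by contradiction, I would suppose that every connected subset of the set of nontrivial $T$-pairs whose closure meets $\Sigma_0$ has compact closure in $\mathcal O$. Whyburn's connectivity lemma then furnishes a separation of $\Sigma\cap\mathcal O$ producing a bounded open set $U\subseteq\mathcal O$ with $\Sigma_0\subseteq U$ and $\Sigma\cap\partial U=\varnothing$. Complete continuity makes $I-\Theta(\lambda,\cdot)$ admissible on the slices of $U$; by homotopy invariance of the fixed-point index, sliding the parameter down to $0$ and using that no solutions sit on $\partial U$, the index over $U$ equals the nonzero number computed above, while the assumed compactness and absence of escape force a degree-splitting argument to make that same index vanish, a contradiction. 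Hence a connected set $\Gamma$ of nontrivial $T$-pairs with $\overline\Gamma\cap\mathcal O$ noncompact and $\overline\Gamma$ meeting $\Sigma_0$ must exist, which is the assertion.

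The hard part will be the two compactness bookkeeping issues. First, because the domain $[0,\infty)\X C_T(\R^k)$ is itself noncompact and $\mathcal O$ is an arbitrary open set (not a product), I must verify that $I-\Theta$ is proper on closed bounded subsets of $\mathcal O$ and that the degree is admissible on the sets produced by Whyburn's lemma; this is exactly where complete continuity of $\Theta$ and a priori control of $\|x\|_{C^1}$ on the solution set enter. Second, the finite-dimensional reduction identifying the infinite-dimensional index with $\deg(\g,\widetilde{\mathcal O})$ demands care with orientation and sign conventions and with the fact that $\widetilde{\mathcal O}$ is only the constant-loop slice of $\mathcal O$ at $\lambda=0$. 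Once this dictionary between $\deg(\g,\cdot)$ and the fixed-point index is firmly in place, the remaining ingredients (Ascoli--Arzel\`a compactness, the excision and homotopy properties of the degree, and Whyburn separation) are routine.
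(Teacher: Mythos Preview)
The paper does not actually prove this theorem: it is quoted verbatim from \cite{FuSp97} and used as a black box in the proof of Theorem~\ref{thm:mainRamo}. So there is no ``paper's own proof'' to compare against.

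That said, your outline is essentially the argument one finds in \cite{FuSp97} and related work: reformulate $T$-periodicity as a fixed-point equation for a completely continuous operator, identify the Leray--Schauder fixed-point index at $\lambda=0$ with the Brouwer degree of $\g$ on $\widetilde{\mathcal O}$ via finite-dimensional reduction on the subspace of constants, and then run a Whyburn-type separation argument to produce the noncompact connected branch. The two technical caveats you flag---properness/admissibility on arbitrary open $\mathcal O$ rather than on a product, and the sign/reduction bookkeeping linking $\deg_{LS}(I-\Theta(0,\cdot),\mathcal O_0)$ to $\deg(\g,\widetilde{\mathcal O})$---are exactly the delicate points in the original proof, so your assessment of where the work lies is accurate.
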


Let us now go back to the second-order
equation \eqref{eq:ODEpert}.
We need some further notation.
The pairs $(\lambda,x) \in [0,+ \infty)\times C\sp 1_T(\R)$,
with $x\colon \R \to \R$ a $T$-periodic solution
of \eqref{eq:ODEpert} corresponding to $\lambda$, will be called \emph{$T$-forced pairs} (of \eqref{eq:ODEpert}), and we denote by $X$ the set of these pairs.
Among the $T$-forced pairs we shall consider as \emph{trivial} those of the type $(0,x)$ with $x$ constant.
Given an open subset $\Omega$ of $[0,\infty)\X C^1_T(\R)$, we will denote by $\widetilde{\Omega}   := \big\{u \in \R :(0,\cl{u})\in\Omega\big\}$, where by $\cl{u}$ we mean the constant map $t \mapsto u$, $t \in \R$.

As pointed out in Section \ref{sect:lct}, there is a correspondence between $T$-periodic solutions of the second order equation \eqref{eq:ODEpert} and of the first order system in \eqref{eq:expanded}. A similar correspondence holds between their ``$T$-periodic pairs'' in a sense that we are going to specify. We point out that, since equation \eqref{eq:expanded} can be seen as a special case of \eqref{eq:def-tpair}, in accordance with the notation introduced above,
a pair $(\lambda,\xi)\in[0,\infty)\times C_T(\R^{b+2})$ is called a \emph{$T$-pair}  if $\xi$ is a $T$-periodic solution of \eqref{eq:expanded} corresponding to $\lambda$. We also recall that the set of $T$-forced pairs of \eqref{eq:ODEpert} is regarded as a subset of $[0,+ \infty)\times C\sp 1_T(\R)$.

As a first remark we relate the corresponding ``trivial $T$-periodic pairs''.

\begin{remark} \label{rem:uno}
 Observe that if $(0,\cl{q})$, with $q \in\R$, is a trivial $T$-forced pair of \eqref{eq:ODEpert}, then $(0,\cl{\xi_0})$ is a $T$-pair for equation \eqref{eq:expanded}, where $\xi_0 \in\R^{b+2}$ is given by
\[
\xi_0:= (q,0,\varphi(q,0)\ldots,\varphi(q,0))
\]
that is,  by Remark \ref{rem:zero}, $\xi_0\in G^{-1}(0)$.
Conversely, any trivial $T$-pair of \eqref{eq:expanded}
must be of the form $(0,\cl{\xi_0})$,
 where $\xi_0 \in\R^{b+2}$ is such that $G(\xi_0)=0$.
Thus, again by Remark~\ref{rem:zero}, we have $\xi_0= (q,0,\varphi(q,0)\ldots,\varphi(q,0))$ for some $q \in\R$, and consequently $(0,\cl{q})$ is a trivial $T$-forced pair for \eqref{eq:expanded}.
\end{remark}

Let us now establish a general correspondence between the sets of $T$-forced pairs and of $T$-pairs, that preserves the notion of triviality.

Let $\mathcal{J}\colon [0,+ \infty)\times C\sp 1_T(\R) \to [0,\infty)\X C_T(\R^{b+2})$ be the map defined as follows:
\begin{equation}\label{def:omeo}
\mathcal{J}: (\lambda,x_0) \mapsto (\lambda,\xi)
\end{equation}
where $\xi:=(x_0,y_0,y_1\ldots,y_b)$ is given by
\begin{equation*}
\left\{\begin{array}{l}
 y_0(t):= \dot x_0(t),\\
  y_i(t):=\int_{-\infty}^t \gamma_a^i(t-s)\varphi(x_0(s),y_0(s))\dif{s},\quad i=1,\ldots,b.
 \end{array}\right.
\end{equation*}
As above, denote by $X$ the set of the
$T$-forced pairs of \eqref{eq:ODEpert}, and let $Y\subseteq [0,\infty)\X C_T(\R^{b+2})$ be the set of the
$T$-pairs of~\eqref{eq:expanded}.

\begin{lemma}\label{lemma:corrispRami}
 Let $\left.\mathcal{J}\right|_X\colon X\to Y$ be the restriction to $X$ of $\mathcal{J}$. Then $\left.\mathcal{J}\right|_X$ is a homeomorphism of $X$ onto $Y$,  which establishes a bijective correspondence between the trivial $T$-forced pairs in $X$ and the trivial $T$-pairs in $Y$.
\end{lemma}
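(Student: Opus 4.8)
The plan is to exhibit an explicit continuous inverse of $\mathcal{J}|_X$ and check that both maps are continuous in the relevant Banach-space norms, with Theorems~\ref{thm:equivSol2} and \ref{thm:equivSol1} doing the heavy lifting at the level of solutions. First I would verify that $\mathcal{J}$ actually maps $X$ into $Y$: this is precisely the content of Theorem~\ref{thm:equivSol2}, which says that if $x_0$ is a $T$-periodic solution of \eqref{eq:ODEpert} corresponding to $\lambda$, then $\xi=(x_0,y_0,\ldots,y_b)$ is a $T$-periodic solution of \eqref{eq:expanded} corresponding to the same $\lambda$; since $\lambda$ is left unchanged, $\mathcal{J}|_X$ is well defined as a map $X\to Y$. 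Surjectivity onto $Y$: given $(\lambda,\xi)\in Y$ with $\xi=(x_0,y_0,\ldots,y_b)$, Theorem~\ref{thm:equivSol1} gives that $x_0$ solves \eqref{eq:ODEpert}, so $(\lambda,x_0)\in X$; and Lemma~\ref{lem:unicsol} applied to $z_0(t)=\varphi(x_0(t),y_0(t))$ forces $y_i(t)=\int_{-\infty}^t\gamma_a^i(t-s)\varphi(x_0(s),y_0(s))\dif{s}$ for $i=1,\ldots,b$, while $y_0=\dot x_0$ follows from the first component of \eqref{eq:expanded}; hence $\mathcal{J}(\lambda,x_0)=(\lambda,\xi)$. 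The same uniqueness argument shows injectivity: if $\mathcal{J}(\lambda,x_0)=\mathcal{J}(\lambda',x_0')$ then $\lambda=\lambda'$ and the first two components give $x_0=x_0'$ (equal functions with equal derivatives determined by the $y_0$-component). Thus $\mathcal{J}|_X$ is a bijection, with inverse $(\lambda,\xi)=(\lambda,(x_0,y_0,\ldots,y_b))\mapsto(\lambda,x_0)$, i.e.\ the restriction of the obvious projection $[0,\infty)\times C_T(\R^{b+2})\to[0,\infty)\times C^1_T(\R)$, $(\lambda,(x_0,y_0,\ldots,y_b))\mapsto(\lambda,x_0)$ — this is legitimate because on $Y$ the first component $x_0$ is automatically $C^1$ (indeed $C^2$) and $y_0=\dot x_0$.

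Next I would establish bicontinuity. The inverse $\mathcal{J}|_X^{-1}$ is easy: it is the restriction of a bounded linear projection (in the appropriate norms one must note that $\|x_0\|_{C^1}\le\|x_0\|_{C^0}+\|y_0\|_{C^0}\le\|\xi\|_{C^0}$ on $Y$, since $y_0=\dot x_0$ there), hence continuous. For continuity of $\mathcal{J}|_X$ itself, suppose $(\lambda_n,x_n)\to(\lambda,x_0)$ in $[0,\infty)\times C^1_T(\R)$; then $\lambda_n\to\lambda$ and $y_0^{(n)}=\dot x_n\to\dot x_0=y_0$ uniformly. The crux is to show $y_i^{(n)}\to y_i$ uniformly for $i=1,\ldots,b$, where $y_i^{(n)}(t)=\int_{-\infty}^t\gamma_a^i(t-s)\varphi(x_n(s),y_0^{(n)}(s))\dif{s}$. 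Since $\varphi$ is locally Lipschitz and the sequences $x_n,\dot x_n$ are uniformly bounded (being convergent in $C^1_T$), $\varphi(x_n(\cdot),\dot x_n(\cdot))$ converges uniformly to $\varphi(x_0(\cdot),\dot x_0(\cdot))$ on $\R$; combined with $\int_0^{\infty}\gamma_a^i(\tau)\dif\tau=1$ (the gamma density integrates to $1$), the change of variable $\tau=t-s$ gives the uniform bound $|y_i^{(n)}(t)-y_i(t)|\le\int_0^\infty\gamma_a^i(\tau)\,|\varphi(x_n(t-\tau),\dot x_n(t-\tau))-\varphi(x_0(t-\tau),\dot x_0(t-\tau))|\dif\tau\le\|\varphi(x_n,\dot x_n)-\varphi(x_0,\dot x_0)\|_\infty\to 0$, uniformly in $t$. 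Hence $\mathcal{J}(\lambda_n,x_n)\to\mathcal{J}(\lambda,x_0)$ in $[0,\infty)\times C_T(\R^{b+2})$, establishing continuity.

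Finally, the claim about triviality is immediate from the constructions already recorded in Remark~\ref{rem:uno}: a trivial $T$-forced pair $(0,\cl{q})$ has $\lambda=0$ and $x_0\equiv q$, hence $y_0\equiv 0$ and, evaluating the defining integrals with $\int_0^\infty\gamma_a^i=1$, $y_i\equiv\varphi(q,0)$ for all $i$; so $\mathcal{J}(0,\cl q)=(0,\cl{\xi_0})$ with $\xi_0=(q,0,\varphi(q,0),\ldots,\varphi(q,0))$, which is a trivial $T$-pair of \eqref{eq:expanded} by Remark~\ref{rem:zero}. Conversely, by Remark~\ref{rem:uno} every trivial $T$-pair of \eqref{eq:expanded} has this form and corresponds under $\mathcal{J}|_X^{-1}$ to the trivial $T$-forced pair $(0,\cl q)$. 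Thus $\mathcal{J}|_X$ restricts to a bijection between trivial pairs, completing the proof.

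I expect the main obstacle to be the uniform-convergence estimate for the integral operators $y_i^{(n)}\to y_i$ over the unbounded history $(-\infty,t]$: one must argue carefully that no tail contribution escapes, which is exactly where the normalization $\int_0^\infty\gamma_a^i(\tau)\dif\tau=1$ and the uniform (rather than merely pointwise) convergence of $\varphi(x_n,\dot x_n)$ — a consequence of $C^1_T$-convergence plus local Lipschitzness of $\varphi$ on the relevant compact set — are essential. A secondary technical point worth stating cleanly is the norm bookkeeping showing that the projection $\mathcal{J}|_X^{-1}$ is continuous from the $C^0$-norm on $C_T(\R^{b+2})$ to the $C^1$-norm on $C^1_T(\R)$, which works only because $y_0=\dot x_0$ on $Y$.
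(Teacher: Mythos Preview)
Your proof is correct and follows essentially the same route as the paper's: continuity of $\mathcal{J}$ via the local Lipschitz hypothesis on $\varphi$, bijectivity from Theorems~\ref{thm:equivSol2} and~\ref{thm:equivSol1} (with Lemma~\ref{lem:unicsol} behind the scenes), the inverse realized as projection onto the first two components, and the triviality correspondence via Remark~\ref{rem:uno}. The paper's own argument is far terser---it asserts continuity ``by construction'' and does not spell out the $C^1$-norm bookkeeping for the inverse---so your explicit uniform-convergence estimate for the $y_i^{(n)}$ and your remark that $y_0=\dot x_0$ on $Y$ are welcome elaborations rather than deviations.
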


\begin{proof}
Since $\varphi$ is locally Lipschitz continuous, the continuity of $\mathcal{J}$ is obtained by construction. Whence we get the continuity of the restriction $\left.\mathcal{J}\right|_X$. Injectivity and surjectivity of $\left.\mathcal{J}\right|_X\colon X\to Y$ follow from theorems \ref{thm:equivSol2} and \ref{thm:equivSol1}.

The inverse map $\left(\left.\mathcal{J}\right|_X\right)^{-1}$ can be seen merely as the projection onto the first two components, so it is continuous.

Finally the last part of the assertion follows by Remark \ref{rem:uno}.
\end{proof}

We are now in a position to state and prove our main result concerning the set of $T$-forced pairs of \eqref{eq:ODEpert}.

\begin{theorem}\label{thm:mainRamo}
Denote by $X\subseteq [0,\infty)\X C^1_T(\R)$ the set of $T$-forced pairs of \eqref{eq:ODEpert}. Let $\Phi$ be the real-valued function defined in \eqref{eq:defphi}, and
suppose that $(\alpha,\beta)\subseteq \R$ is such that $\Phi(\alpha)\cdot\Phi(\beta)<0$.
Let
$\Omega\subseteq [0,\infty)\X C^1_T(\R)$ be open and
such that $\widetilde \Omega =(\alpha,\beta)$.
Then, in $X\cap\Omega$ there is a connected subset $\Gamma$ of nontrivial $T$-forced pairs whose closure relative to $\Omega$ is not compact and intersects the set
\begin{equation} \label{eq:zeroinomega}
\big\{(0,\cl{u})\in\Omega:u\in (\alpha,\beta)\cap\Phi^{-1}(0)\big\}.
\end{equation}
\end{theorem}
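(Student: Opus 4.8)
The plan is to transfer the branching result for the first-order system \eqref{eq:expanded}, which we already possess in the form of Theorem \ref{thm:ramivecchio}, back to the second-order equation \eqref{eq:ODEpert} via the homeomorphism $\mathcal{J}$ of Lemma \ref{lemma:corrispRami}, using the degree computation of Theorem \ref{thm:degree} to supply the nonvanishing degree hypothesis. First I would set $\mathcal{O}:=\mathcal{J}(\Omega\cap X)\cup\big((0,\infty)\times W^*\big)\cup\big(\{0\}\times W^*\big)$ — more carefully, I would choose $\mathcal{O}$ to be an open subset of $[0,\infty)\times C_T(\R^{b+2})$ whose associated set $\widetilde{\mathcal{O}}$ (the slice of constant maps at $\lambda=0$) equals $W^*=(\alpha,\beta)\times\R^{b+1}$, and which is compatible with $\Omega$ in the sense that $\mathcal{J}^{-1}(\mathcal{O})\cap X$ relates correctly to $\Omega\cap X$. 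The natural choice is $\mathcal{O}=\mathcal{J}(\Omega)$ if $\mathcal{J}$ were open, but since $\mathcal{J}$ is only a homeomorphism onto $Y$, I would instead take $\mathcal{O}$ to be any open set in the big space with $\mathcal{O}\cap Y=\mathcal{J}(\Omega\cap X)$ and $\widetilde{\mathcal{O}}=W^*$; such a set exists because $\widetilde\Omega=(\alpha,\beta)$ forces the constant-map slice of $\mathcal{J}(\Omega\cap X)$ at $\lambda=0$ to be exactly $\{(0,\cl{\xi_0}):\xi_0=(q,0,\varphi(q,0),\dots,\varphi(q,0)),\ q\in(\alpha,\beta)\}$ by Remark \ref{rem:uno}, which lies in $\{0\}\times W^*$, and we can thicken in the remaining directions.

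Second, with such an $\mathcal{O}$ in hand, I would verify the hypotheses of Theorem \ref{thm:ramivecchio} applied to \eqref{eq:expanded}: the map plays the role of $\g$ is $G$, and $\widetilde{\mathcal{O}}=W^*$, so by Theorem \ref{thm:degree} we have that $G$ is admissible for the degree in $W^*$ and $\deg(G,W^*)\neq 0$ (here using $\Phi(\alpha)\cdot\Phi(\beta)<0$). Theorem \ref{thm:ramivecchio} then yields a connected set $\Gamma'$ of nontrivial $T$-pairs of \eqref{eq:expanded} in $[0,\infty)\times C_T(\R^{b+2})$, contained in $Y$, whose closure in $\mathcal{O}$ is noncompact and meets $\{(0,\cl{\xi_0})\in\mathcal{O}:G(\xi_0)=0\}$. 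Third, I would push this back: set $\Gamma:=(\mathcal{J}|_X)^{-1}(\Gamma')\subseteq X\cap\Omega$. Since $\mathcal{J}|_X\colon X\to Y$ is a homeomorphism, $\Gamma$ is connected; since $\mathcal{J}$ maps trivial $T$-forced pairs bijectively onto trivial $T$-pairs (Lemma \ref{lemma:corrispRami}), $\Gamma$ consists of nontrivial pairs. The noncompactness of $\overline{\Gamma'}$ relative to $\mathcal{O}$ transfers to noncompactness of $\overline{\Gamma}$ relative to $\Omega$ because $\mathcal{J}|_X$ is a homeomorphism of the pair $(X\cap\Omega, X)$ onto $(Y\cap\mathcal{O},Y)$ — closures and compactness are topological invariants, and one checks $\mathcal{J}(\overline{\Gamma}\cap\Omega)=\overline{\Gamma'}\cap\mathcal{O}$ using continuity of $\mathcal{J}$ and of its inverse. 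Finally, the intersection point: $\Gamma'$ meets some $(0,\cl{\xi_0})$ with $G(\xi_0)=0$, hence by Remark \ref{rem:zero}, $\xi_0=(\bar u,0,\varphi(\bar u,0),\dots,\varphi(\bar u,0))$ with $\Phi(\bar u)=0$; since this point lies in $\mathcal{O}$ and hence $\bar u\in\widetilde{\mathcal{O}}=W^*$, forcing $\bar u\in(\alpha,\beta)$; and $(\mathcal{J}|_X)^{-1}(0,\cl{\xi_0})=(0,\cl{\bar u})$, which therefore lies in $\Omega$ and in the set \eqref{eq:zeroinomega}.

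I expect the main obstacle to be the bookkeeping around the open set $\mathcal{O}$: the homeomorphism $\mathcal{J}|_X\colon X\to Y$ is a homeomorphism of subspaces, not a homeomorphism of the ambient spaces, so I cannot simply take $\mathcal{O}=\mathcal{J}(\Omega)$. The delicate point is to produce an open $\mathcal{O}$ in the ambient space $[0,\infty)\times C_T(\R^{b+2})$ such that (i) $\mathcal{O}\cap Y=\mathcal{J}(\Omega\cap X)$ (so that the transferred branch lives inside $\mathcal{O}$ and closures relative to $\mathcal{O}$ correspond to closures relative to $\Omega$) and (ii) $\widetilde{\mathcal{O}}=W^*$ exactly (so that Theorem \ref{thm:degree} applies with the right domain and the intersection point falls in \eqref{eq:zeroinomega}). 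Condition (ii) essentially dictates the slice at $\lambda=0$ among constant maps, and Remark \ref{rem:uno} guarantees consistency between that slice and $\mathcal{J}(\Omega\cap X)$ there; away from that slice one has freedom. A clean way to get (i) is to observe that $Y$ is closed in $[0,\infty)\times C_T(\R^{b+2})$ (it is the solution set of an ODE) and $\mathcal{J}(\Omega\cap X)$ is open in $Y$ (as $\Omega\cap X$ is open in $X$ and $\mathcal{J}|_X$ a homeomorphism), so $\mathcal{J}(\Omega\cap X)=U\cap Y$ for some open $U$; then intersect $U$ with a suitable open cylinder over $W^*$ and adjust near $\lambda=0$ to pin down $\widetilde{\mathcal{O}}$. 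Once $\mathcal{O}$ is fixed, everything else is a routine diagram chase through the homeomorphism together with Remarks \ref{rem:zero} and \ref{rem:uno}.
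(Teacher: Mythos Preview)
Your approach is essentially the paper's: verify $\deg(G,W^*)\neq 0$ via Theorem~\ref{thm:degree}, apply Theorem~\ref{thm:ramivecchio} to \eqref{eq:expanded} on a suitable open set $\mathcal{O}$, and pull the resulting branch back through the homeomorphism of Lemma~\ref{lemma:corrispRami}. The only difference is that the paper sidesteps your ``main obstacle'' by taking simply $\mathcal{O}=\Omega\times C_T(\R\times\R^{b})$; since $\mathcal{J}(\lambda,x_0)=(\lambda,x_0,y_0,\dots,y_b)$ keeps $(\lambda,x_0)$ as its leading components, this product set already satisfies $\widetilde{\mathcal{O}}=\widetilde{\Omega}\times\R^{b+1}=W^*$ and $\mathcal{J}|_X^{-1}(\mathcal{O}\cap Y)=\Omega\cap X$, so no abstract thickening argument is required.
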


\begin{proof}
Let $\mathcal{O}=\Omega\times C_T(\R\times\R^{b})$. Clearly, $\wt{\mathcal{O}}=\wt\Omega\times\R\times\R^{b}=(\alpha,\beta)\times\R\times\R^{b}$. Thus, Theorem \ref{thm:degree} implies that $\deg(G,\wt{\mathcal{O}})$ is well-defined and nonzero.
So, there exists a connected set $\Upsilon$ of nontrivial $T$-pairs for \eqref{eq:expanded} as in Theorem \ref{thm:ramivecchio}. Finally, by Lemma~\ref{lemma:corrispRami}, one sees that $\Gamma:=\mathcal{J}^{-1}(\Upsilon)$ is a connected set as in the assertion. 
\end{proof}

We remark that, as in the well-known  case of the resonant harmonic oscillator
\[
 \ddot x = -x +\lambda \sin t,
\]
this unbounded branch is possibly contained in the slice
$\{0\}\X C^1_T(\R)$.
We will discuss some conditions preventing this ``pathological'' situation in Section \ref{sec:ejecting}.

\section{Computation of the degree} \label{sect:degree}

\subsection{Brouwer degree in Euclidean spaces}
We will make use of the Brouwer degree in $\R^k$
in a slightly extended version (see e.g.\ \cite{BFPS03,DiMa21,Ni}) 
% \marginpar{\scriptsize\red{decidere se aggiungere altre citazioni}}
% that we briefly recall for the reader's convenience.

Let $U$ be an open subset of $\R^k$, $f$ a continuous $\R^k$-valued map whose domain contains the closure $\cl{U}$ of $U$, and $q \in \R^k$. We say that the triple $(f,U,q)$ is \emph{admissible (for the Brouwer degree)} if $f^{-1}(q)\cap U$ is compact.

The Brouwer degree is a function that to any admissible triple $(f,U,q)$ assigns an integer, denoted by $\deg(f,U,q)$ and called the \emph{Brouwer degree of $f$ in $U$ with target $q$}.
Roughly speaking, $\deg(f,U,q)$ is an algebraic count of the solutions in $U$ of the equation $f(p) = q$.
In fact, one of the properties of this integer-valued function is given by the following \emph{computation formula.} 

Recall that, if $f\colon U\to \mathcal \R^k$ is a $C\sp{1}$ map, an element $p\in U$ is said to be a \emph{regular point} (of $f$) if the differential of $f$ at $p$, $df_p$, is surjective.
Non-regular points are called \emph{critical (points)}.
The \emph{critical values} of $f$ are those points of $\R^k$ which lie in the image $f(C)$ of the set $C$ of critical points.
Any $q\in \mathcal \R^k$ which is not in $f(C)$ is a \emph{regular value}.
Therefore, in particular, any element of $\R^k$ which is not in the image of $f$ is a regular value.

\medskip\noindent
\textbf{Computation formula.}
\label{Computation Formula}
If $(f,U,q)$ is admissible, $f$ is smooth, and $q$ 
is a regular value for $f$ in $U$, then
\begin{equation}\label{eq:comp}
\deg(f,U,q)\; =\sum_{p \in f^{-1}(q) \cap U} \sign(df_p).
\end{equation}
This formula is actually the basic definition of the Brouwer degree, and the integer associated to any admissible triple $(g,U,r)$ is defined by
\begin{equation}\label{eq:approx}
\deg(g,U,r) := \deg(f,U,q),
\end{equation}
where $f$ and $q$ satisfy the assumptions of the Computation Formula and are, respectively, ``sufficiently close'' to $g$ and $r$.
It is known that this is a well-posed definition.

The more classical and well-known definition of Brouwer degree is usually given in the subclass of triples $(f,U,q)$ such that $f\colon\cl{U}\to\R^k$ is continuous, $U$ is bounded and $q \notin f(\partial U)$.
However, all the standard properties of the degree, such as homotopy invariance, excision, additivity, existence, are still valid in this more general context.

Since in this paper the target point $q$ will always be the origin, for the sake of simplicity, we will simply write $\deg(f,U)$ instead of $\deg(f,U,0)$.
In this context, we will say that an element $p \in f^{-1}(0)$ is a \emph{nondegenerate zero} (of $f$) if $\det (df_p) \neq 0$; this means, equivalently, that $p$ is a regular point. Observe that $\deg(f,U)$ can be regarded also as the degree (or characteristic, or rotation) of the map $f$ seen as a tangent vector field on $\R^k$.

\subsection{The degree of the map $G$}

The last part of this section is devoted to the proof of Theorem \ref{thm:degree}.
Roughly speaking we will relate the degree of the map $G$, defined in 
\eqref{eq:vectorFieldG} by
\[
G(u, v_0,v_1,\ldots,v_b)=\left(v_0,g(u,v_0,v_b),a(\varphi(u,v_0)-v_1),a(v_1-v_2)\ldots,a(v_{b-1}-v_b)\right),
\]
with that of the function $\Phi\colon \R\to\R$ in \eqref{eq:defphi}, given by 
\[
\Phi(u)=g(u,0,\varphi(u,0)).
\]

To simplify the computation of the degree, it is convenient to introduce the following map on $\R^{b+2}$:
\begin{equation*}\label{eq:defH}
 \mathcal{G}
 (u, v_0,v_1,\ldots,v_b):=\left(v_0,g(u,v_0,v_b),a(\varphi(u,v_0)-v_b),a(v_1-v_2)\ldots,a(v_{b-1}-v_b)\right) 
\end{equation*} 

\begin{lemma}\label{lem:homotopy}
Let $G$ and $\mathcal G$ be as above, and let $V\subseteq \R^{b+2}$ be open. Suppose that one of the maps $G$ or $\mathcal G$  is admissible for the Brouwer degree in $V$; then, so is the other and they are admissibly homotopic in $V$.
\end{lemma}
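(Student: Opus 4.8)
The plan is to connect $G$ and $\mathcal{G}$ by the affine homotopy and to exploit the fact---already contained in Remark~\ref{rem:zero}---that the two maps have the same zero set.

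First I would observe that $G^{-1}(0)=\mathcal{G}^{-1}(0)$. For $G$ this is Remark~\ref{rem:zero}; for $\mathcal{G}$ the argument is the same: the last $b-1$ components $a(v_1-v_2),\dots,a(v_{b-1}-v_b)$ of $\mathcal{G}$ force $v_1=\dots=v_b$ at any zero, and on this set the third component $a(\varphi(u,v_0)-v_b)$ of $\mathcal{G}$ coincides with $a(\varphi(u,v_0)-v_1)$, i.e.\ with the third component of $G$. Hence in both cases $G^{-1}(0)=\mathcal{G}^{-1}(0)=\{(u,0,\varphi(u,0),\dots,\varphi(u,0)):\Phi(u)=0\}$. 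In particular $G^{-1}(0)\cap V=\mathcal{G}^{-1}(0)\cap V$, so one of these sets is compact if and only if the other is; this already proves that $G$ is admissible for the Brouwer degree in $V$ if and only if $\mathcal{G}$ is.

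Next I would introduce the map $H\colon[0,1]\times\R^{b+2}\to\R^{b+2}$ defined by $H(\theta,\xi)=(1-\theta)G(\xi)+\theta\,\mathcal{G}(\xi)$, which is continuous (since $g$ is continuous and $\varphi$, being locally Lipschitz, is continuous) and satisfies $H(0,\cdot)=G$ and $H(1,\cdot)=\mathcal{G}$. Concretely $H$ differs from $G$ only in the third component, which equals $a\bigl(\varphi(u,v_0)-(1-\theta)v_1-\theta v_b\bigr)$. The crucial point is that the zero set of $H(\theta,\cdot)$ is independent of $\theta$: any zero of $H(\theta,\cdot)$ satisfies $v_1=\dots=v_b$, and on that locus $(1-\theta)v_1+\theta v_b=v_b$ for every $\theta$, so the equation $H(\theta,\xi)=0$ reduces to $G(\xi)=0$. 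Consequently $H^{-1}(0)\cap\bigl([0,1]\times V\bigr)=[0,1]\times\bigl(G^{-1}(0)\cap V\bigr)$, which is compact---being a product of two compact sets---by the admissibility hypothesis. Thus $H$ is an admissible homotopy in $V$ joining $G$ and $\mathcal{G}$, which is the assertion.

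There is no genuinely hard step here; the only thing that requires a moment of care is checking that the zero set of the homotopy does not escape the boundary of $V$ or lose compactness as $\theta$ varies, and this is immediate once one notices that $\theta$ drops out of the zero equation on the set $\{v_1=\dots=v_b\}$.
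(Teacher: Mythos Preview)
Your proof is correct and is essentially the paper's own argument: since $G$ and $\mathcal G$ differ only in the third component, your affine homotopy $H(\theta,\cdot)=(1-\theta)G+\theta\,\mathcal G$ coincides with the paper's explicit homotopy $\mathcal H$, whose third component is $a\bigl(\varphi(u,v_0)-[(1-\theta)v_1+\theta v_b]\bigr)$. The paper observes, just as you do, that the zero set of the homotopy is $\theta$-independent and equal to $G^{-1}(0)=\mathcal G^{-1}(0)$, from which admissibility follows immediately.
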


\begin{proof}
 For $(\lambda, u, v_0,v_1,\ldots,v_b)\in [0,1]\times V$, consider the map
\begin{multline*}
  \mathcal{H}(\lambda, u, v_0,v_1,\ldots,v_b)=\\=\left(v_0,g(u,v_0,v_b),a(\varphi(u,v_0)-[\lambda v_b+(1-\lambda)v_1]),a(v_1-v_2)\ldots,a(v_{b-1}-v_b)\right) ,
\end{multline*}
and observe that $\mathcal{H}(\lambda, u, v_0,v_1,\ldots,v_b)=0$ if and only if $(u, v_0,v_1,\ldots,v_b)\in V\cap G^{-1}(0) = V\cap \mathcal G^{-1}(0)$. Hence $\mathcal{H}$ is an admissible homotopy.
\end{proof}

Now we prove that if the zeros of $\Phi$ are
nondegenerate, so are those of $\mathcal G$.

\begin{lemma}\label{lem:nondegen}
All zeros of $\Phi$ are nondegenerate if and only if all zeros of  $\mathcal G$ are nondegenerate. 
\end{lemma}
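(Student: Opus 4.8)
The plan is to reduce the statement to a pointwise computation of a Jacobian determinant. The key is the bijective correspondence, already recorded in Remark \ref{rem:zero} (whose argument applies to $\mathcal G$ verbatim, since $\mathcal G^{-1}(0)=G^{-1}(0)$ by the same elementary reasoning used in the proof of Lemma \ref{lem:homotopy}), between the zeros $\bar u$ of $\Phi$ and the zeros of $\mathcal G$, the latter being exactly the points $\bar\xi=\big(\bar u,0,\varphi(\bar u,0),\dots,\varphi(\bar u,0)\big)$. Hence it suffices to prove that, for every $\bar u\in\Phi^{-1}(0)$, the differential $d\mathcal G_{\bar\xi}$ is invertible if and only if $\Phi'(\bar u)\neq 0$. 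In fact I would establish the sharper identity
\[
\det d\mathcal G_{\bar\xi}=(-1)^{b+1}a^{b}\,\Phi'(\bar u),
\]
from which the lemma is immediate because $a\neq 0$, and then the property ``all zeros are nondegenerate'' transfers through the bijection.

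To prove the identity, first I would write out the $(b+2)\times(b+2)$ Jacobian matrix of $\mathcal G$ at $\bar\xi$ in the coordinates $(u,v_0,v_1,\dots,v_b)$. Denoting by $g_u,g_{v_0},g_{v_b}$ the partial derivatives of $g$ at $\big(\bar u,0,\varphi(\bar u,0)\big)$ and by $\varphi_u,\varphi_{v_0}$ those of $\varphi$ at $(\bar u,0)$, the first row is $(0,1,0,\dots,0)$, the second row is $(g_u,g_{v_0},0,\dots,0,g_{v_b})$, the third row is $(a\varphi_u,a\varphi_{v_0},0,\dots,0,-a)$, and the remaining $b-1$ rows form the bidiagonal block with entries $a$ and $-a$ coming from the differentials of $v\mapsto a(v_{i-1}-v_i)$, $i=2,\dots,b$.

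Then I would evaluate the determinant by iterated Laplace expansion. Expanding along the first row removes the $v_0$-column and contributes a sign $(-1)^{1+2}$; expanding the resulting $(b+1)\times(b+1)$ minor along its $u$-column leaves two terms, one carrying the factor $g_u$ and one carrying $a\varphi_u$, and a further expansion of each along its top row reduces the problem to the determinant of an upper-triangular $(b-1)\times(b-1)$ matrix with $a$'s on the diagonal, equal to $a^{b-1}$ (with the convention that this determinant is $1$ when $b=1$). Collecting the powers of $a$ and the cofactor signs yields $\det d\mathcal G_{\bar\xi}=(-1)^{b+1}a^{b}\,(g_u+g_{v_b}\varphi_u)$; since $\Phi'(\bar u)=g_u+g_{v_b}\varphi_u$ by the chain rule applied to $\Phi(u)=g(u,0,\varphi(u,0))$, this is precisely the claimed identity, and the lemma follows.

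The only real work is this determinant computation; everything else is bookkeeping. The points that require care are the degenerate case $b=1$, where there is no bidiagonal block and the matrix is merely $3\times 3$, and the correct tracking of the cofactor signs through the repeated expansions.
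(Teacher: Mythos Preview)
Your proposal is correct and follows essentially the same approach as the paper: both establish the bijection between zeros of $\Phi$ and $\mathcal G$, write out the Jacobian $J\mathcal G_{\bar\xi}$, and reduce via Laplace expansion to the identity $\det J\mathcal G_{\bar\xi}=(-1)^{b-1}a^{b}\Phi'(\bar u)$ (your $(-1)^{b+1}$ is of course the same sign). The only cosmetic difference is the order of the expansions---the paper first strips off the bidiagonal block to reach a $3\times 3$ determinant, whereas you expand along the first row and then the $u$-column---and your explicit mention of the $b=1$ case, which the paper leaves implicit.
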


\begin{proof}
First observe that, as in Remark \ref{rem:zero},
the zeros of $\mathcal G$ and those of $\Phi$ correspond.
In fact, if $\mathcal G(\bar u, v_0,v_1,\ldots,v_b)=0$ then $\Phi(\bar u)=0$, $v_0=0$ and $v_1=v_2=\ldots=v_b=\varphi(\bar u,0)=:\bar w$.
Conversely, for any $\bar u\in\Phi^{-1}(0)$, then $\mathcal G(\bar u,0,\bar w\ldots,\bar w)=0$.

Assume now that  $\bar \xi:=(\bar u,0,\bar w\ldots,\bar w)$ is a nondegenerate zero of $\mathcal G$. We compute the Jacobian matrix of $\mathcal G$ at $\bar \xi$ as follows:
  \begin{equation*}
   J\mathcal G_{\bar \xi}=\left(
          \begin{array}{cc|ccccc|c}
           0 & 1 & 0 & 0 & \hdots & \hdots & 0 & 0\\
           \D_1 g(\bar u,0,\bar w) & \D_2 g(\bar u,0,\bar w) & 0 & 0 & \hdots & \hdots& 0 & \D_3 g(\bar u,0,\bar w)\\
           a\,\D_1 \varphi(\bar u,0) & a\, \D_2 \varphi(\bar u,0) & 0 & 0 & \hdots &\hdots & 0 & -a\\ \hline
           0 & 0 & a & -a & 0& \hdots & 0 & 0 \\
           0 & 0 & 0 & a & -a & \ddots & 0 & 0 \\
           \vdots  & \vdots  & \ddots & \ddots &  \ddots & \ddots & \vdots& \vdots \\
           \vdots  & \vdots  & \ddots & \ddots &  \ddots & \ddots & \ddots& \vdots  \\
           0 &0 & 0 & 0 &\hdots & 0& a & -a  
          \end{array}
          \right)\;.
  \end{equation*}
By direct computation based on Laplace expansion we obtain
\[
\det J\mathcal G_{\bar \xi}=
(-a)^{b-1} \,\det
          \begin{pmatrix}
           0 & 1 & 0 \\
           \D_1 g(\bar u,0,\bar w) & \D_2 g(\bar u,0,\bar w) & 
           \D_3 g(\bar u,0,\bar w)\\
           a\D_1 \varphi(\bar u,0) & a\D_2 \varphi(\bar u,0) & -a  
          \end{pmatrix}=
          \]
          \[
= - (-a)^{b-1} \,\det
          \begin{pmatrix}
          \D_1 g(\bar u,0,\bar w)  & 
            \D_3 g(\bar u,0,\bar w)\\
           a\D_1 \varphi(\bar u,0)  & -a  
          \end{pmatrix}=
          \]
          \[
          = (-1)^{b-1}a^{b} \cdot\left[\D_1 g(\bar u,0,\bar w) +\D_3 g(\bar u,0,\bar w) \, \D_1 \varphi(\bar u,0)\right]= (-1)^{b-1}a^{b} \cdot \Phi'(\bar u).
\]
Thus $\bar u$ is a nondegenerate zero of the function $\Phi$.
The proof of the converse implication is analogous.
\end{proof}

Let us now compute the degree of the map $\mathcal G$.
\begin{lemma}\label{lem:comput}
 Assume $\Phi$ is admissible on an open set $U\subseteq \R$ and all its zeros are nondegenerate, then $\mathcal G$ is admissible in $U^*=U\times\R^{b+1}$, and 
 \[
  \deg(\mathcal G, U^*)=(-1)^{b-1}\deg(\Phi, U).
 \]
\end{lemma}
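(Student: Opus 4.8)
The plan is to reduce the computation of $\deg(\mathcal G, U^*)$ to that of $\deg(\Phi,U)$ by the standard ``reduction to nondegenerate zeros plus computation formula'' strategy. Since $\Phi$ is admissible on $U$ and all its zeros are nondegenerate, $\Phi^{-1}(0)\cap U$ is compact and discrete, hence finite; say $\Phi^{-1}(0)\cap U=\{\bar u_1,\dots,\bar u_m\}$. First I would check admissibility of $\mathcal G$ on $U^*$: by the correspondence of zeros recalled at the start of the proof of Lemma~\ref{lem:nondegen}, $\mathcal G^{-1}(0)\cap U^*=\{\bar\xi_j:=(\bar u_j,0,\bar w_j,\dots,\bar w_j):j=1,\dots,m\}$ with $\bar w_j=\varphi(\bar u_j,0)$, which is a finite (hence compact) set, so $(\mathcal G,U^*,0)$ is admissible.

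Next I would invoke the Computation Formula. By Lemma~\ref{lem:nondegen} every $\bar\xi_j$ is a nondegenerate zero of $\mathcal G$, so $0$ is a regular value of $\mathcal G$ in $U^*$ and
\[
\deg(\mathcal G,U^*)=\sum_{j=1}^m \sign\big(\det J\mathcal G_{\bar\xi_j}\big).
\]
The determinant was already computed in the proof of Lemma~\ref{lem:nondegen}: $\det J\mathcal G_{\bar\xi_j}=(-1)^{b-1}a^{b}\,\Phi'(\bar u_j)$. Since $a>0$, $\sign\big(\det J\mathcal G_{\bar\xi_j}\big)=(-1)^{b-1}\sign\big(\Phi'(\bar u_j)\big)$ (the factor $(-1)^{b-1}$ being $\pm1$, it pulls out of the sign). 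Therefore
\[
\deg(\mathcal G,U^*)=(-1)^{b-1}\sum_{j=1}^m \sign\big(\Phi'(\bar u_j)\big)=(-1)^{b-1}\deg(\Phi,U),
\]
where the last equality is again the Computation Formula applied to $\Phi$ on $U$, whose zeros are all nondegenerate.

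I do not expect a serious obstacle here, since the heavy linear-algebra computation has already been carried out in Lemma~\ref{lem:nondegen} and the zero correspondence is in place. The only points requiring a small amount of care are: (i) justifying that a compact set of nondegenerate zeros is finite (local invertibility via the inverse function theorem gives discreteness, and discreteness plus compactness gives finiteness); (ii) making sure the application of the Computation Formula is legitimate, i.e.\ that $\mathcal G$ is smooth on $U^*$ — which it is, being built from $g$ and $\varphi$ composed with linear maps, provided we note that $g$ and $\varphi$ are as smooth as needed near the finitely many zeros (strictly, the Computation Formula is stated for smooth maps, but since only a neighbourhood of the isolated nondegenerate zeros matters and the degree is determined there, one may equivalently quote the fact that the degree of a map with only nondegenerate zeros equals the sum of the signs of the Jacobian determinants); and (iii) handling the sign bookkeeping with the constant factor $(-1)^{b-1}a^b$, which is routine. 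If one wants to avoid any smoothness discussion for $g$, the cleanest route is to observe that the statement and both sides of the claimed identity depend only on the local behaviour at the nondegenerate zeros, where one can always perturb $g$ and $\varphi$ to be smooth without changing the relevant Jacobians' signs; alternatively invoke directly the standard fact that $\deg(f,V)=\sum_{p\in f^{-1}(0)\cap V}\sign\det(df_p)$ whenever all zeros in $V$ are nondegenerate and isolated, which is a consequence of the definition via \eqref{eq:approx} together with excision and additivity.
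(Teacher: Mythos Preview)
Your proposal is correct and follows essentially the same route as the paper: establish the bijection between zeros of $\Phi$ in $U$ and zeros of $\mathcal G$ in $U^*$, deduce admissibility of $\mathcal G$, invoke Lemma~\ref{lem:nondegen} for nondegeneracy and the determinant identity $\det J\mathcal G_{\bar\xi}=(-1)^{b-1}a^{b}\Phi'(\bar u)$, and then apply the Computation Formula on both sides. The paper's proof is in fact terser than yours---it does not pause to argue finiteness of the zero set or to discuss the smoothness needed for the Computation Formula---so your points (i)--(iii) are extra care rather than a divergence in strategy.
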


\begin{proof}
As in the proof of Lemma \ref{lem:nondegen} we have that all zeros of $\mathcal G$ are of the form $\bar \xi:=(\bar u,0,\bar w\ldots,\bar w)$ with $\Phi(\bar u)=0$. Moreover, they are all nondegenerate.
In particular, if $\Phi$ is admissible in $U$ then $\Phi^{-1}(0)\cap U$ is compact and so is $\mathcal G^{-1}(0)\cap U^*$, whence the admissibility of $\mathcal G$ in $U^*$.
Let now $\bar u \in \Phi^{-1}(0)\cap U$ be a nondegenerate zero of $\Phi$.
As in the proof of Lemma \ref{lem:nondegen} we have,
\[
\det J\mathcal G_{\bar \xi}=
 (-1)^{b-1}a^{b} \cdot \Phi'(\bar u)
\]
and, consequently,
\[
\sign\det\left(d\mathcal G_{\bar \xi}\right)=(-1)^{b-1}\sign\det \Phi'(\bar u)
\]
Thus, by formula \eqref{eq:comp},
\begin{multline*}
 \deg(\mathcal  G, U^*)=\sum_{\bar \xi\in\mathcal G^{-1}(0)\cap U^*}\sign\det\left(d\mathcal G_{\bar \xi}\right)=\\
 =\sum_{\bar u\in\Phi^{-1}(0)\cap U}(-1)^{b-1}\sign\det \Phi'(\bar u)
=(-1)^{b-1}\deg(\Phi, U),
\end{multline*}
whence the assertion.
\end{proof}
The above Lemmas imply now the assertion of Theorem \ref{thm:degree}.

\begin{proof}[Proof of Theorem \ref{thm:degree}]
Let  $(\alpha,\beta)\subseteq \R$ be as in the assertion of Theorem \ref{thm:degree}.
As in equality \eqref{eq:approx}, we can assume that all zeros of $\Phi$ in $(\alpha,\beta)$ are nondegenerate. The assertion now follows from Lemma \ref{lem:comput} and Lemma \ref{lem:homotopy}.
\end{proof}

\section{Ejecting points and small perturbations}\label{sec:ejecting}

As we have seen by the simple example at the end of Section \ref{sect:branches}, it may happen that the branch $\Gamma$ of $T$-forced pairs of equation \eqref{eq:ODEpert}, as in the assertion of Theorem~\ref{thm:mainRamo}, is completely contained in the slice $\{0\}\times C_T^1(\R)$. In this section we show simple conditions ensuring that this is not the case. Such conditions are based on the key notion of \emph{ejecting set} (see, e.g.,~\cite{FPS00}).
 
 Let us first introduce the following notation: let $\Upsilon$ be a subset of $[0,\infty )\times C_T^1(\R)$. Given $\lambda \geq 0$, let $\Upsilon_{\lambda}$ be the slice $\big\{x\in C_T^1(\R):(\lambda ,x)\in \Upsilon \big\}$.
Below, we adapt to our context the definition of ejecting set of \cite{FPS00}:
 \begin{definition}\label{de:ejecting}
Let $X\subseteq [0,\infty )\times C_T^1(\R)$ be the set of $T$-forced pairs of~\eqref{eq:ODEpert}, and let $X_0$ be the slice of $X$ at $\lambda=0$. We say that $A\subset X_0$ is an \emph{ejecting set} (for $X$) if it is relatively open in $X_0$ and there exists a connected subset of $X$ which meets $A$ and is not contained in $X_0$. In particular, when $A=\{p_0\}$ is a singleton we say that $p_0$ is an \emph{ejecting point}.
 \end{definition}
 
 We now discuss a sufficient condition for an isolated  point of $\Phi$ to be ejecting for the set $X$ of $T$-forced pairs of \eqref{eq:ODEpert}. This condition  is based on a result by J.\ Yorke (\cite{Yo69}, see also \cite{BuMa87}) concerning the period of solutions of an autonomous ODE with Lipschitz continuous right-hand side. 
 
 We point out that an analysis of  \eqref{eq:expanded}, for $\lambda=0$, linearized at its zeros leads to a different, not entirely comparable, approach. A discussion of the latter technique, which is based on the notion of $T$-resonance and requires the knowledge of the spectrum of the linearized equation is outside the scope of the present paper (see, e.g., \cite[Ch.\ 7]{Cr94} and \cite[Ch.\ 2]{Cr64}, a similar idea can be traced back to Poincar\'e see, e.g., \cite{Maw14}; see also \cite{BiSp15} for an application of this idea to multiplicity results). 

The aforementioned result of Yorke is the following:
 
 \begin{theorem}[\cite{Yo69}]\label{th:yorke}
Let $\xi$ be a nonconstant $\tau$-periodic solution of 
\[
 \dot x = \mathcal F(x)
\]
where $\mathcal F\colon W\subseteq\R^k\to\R^k$ is Lipschitz of constant $L$. Then the period $\tau$ of $\xi$ satisfies $\tau\geq 2\pi/L$. 
 \end{theorem}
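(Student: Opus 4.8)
The plan is to prove Yorke's inequality $\tau \ge 2\pi/L$ directly, using only the Lipschitz bound on $\mathcal F$ and a Fourier (or variational) estimate for periodic functions with zero mean. First I would normalize: let $\xi$ be a nonconstant $\tau$-periodic solution of $\dot x = \mathcal F(x)$, set $v(t) := \dot\xi(t)$, and observe that $v$ is $\tau$-periodic and $C^1$ (since $\xi$ is $C^1$ and $\mathcal F\circ\xi$ is Lipschitz, hence $\dot v$ exists a.e.\ and is bounded). The key mean-zero observation is that $\int_0^\tau v(t)\dif t = \xi(\tau) - \xi(0) = 0$, so $v$ has vanishing average over a period. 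Because $\xi$ is nonconstant, $v \not\equiv 0$.

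Next I would set up the core estimate. Since $\mathcal F$ is $L$-Lipschitz, for any $t, s$ we have $|\dot\xi(t) - \dot\xi(s)| = |\mathcal F(\xi(t)) - \mathcal F(\xi(s))| \le L\,|\xi(t) - \xi(s)|$. Averaging the right-hand side cleverly against $s$ over a period, or equivalently differentiating once more, gives $|\dot v(t)| = |\ddot\xi(t)| \le L\,|\dot\xi(t)| = L\,|v(t)|$ wherever $\xi$ is twice differentiable — but this pointwise bound is not quite what drives the $2\pi/L$ constant. The sharp route is the Wirtinger-type inequality: for a $\tau$-periodic, absolutely continuous, mean-zero function $v$ one has
\[
\int_0^\tau |\dot v(t)|^2 \dif t \;\ge\; \Big(\frac{2\pi}{\tau}\Big)^2 \int_0^\tau |v(t)|^2 \dif t,
\]
which follows from Parseval applied to the Fourier series of each component of $v$ (the constant term is absent precisely because of the mean-zero property). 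Combining this with $|\dot v(t)| \le L|v(t)|$ yields
\[
L^2 \int_0^\tau |v|^2 \dif t \;\ge\; \int_0^\tau |\dot v|^2 \dif t \;\ge\; \Big(\frac{2\pi}{\tau}\Big)^2 \int_0^\tau |v|^2 \dif t,
\]
and since $\int_0^\tau |v|^2 \dif t > 0$ we may divide through to get $L^2 \ge (2\pi/\tau)^2$, i.e.\ $\tau \ge 2\pi/L$.

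I expect the main obstacle to be the regularity bookkeeping needed to legitimately write $|\dot v(t)| \le L|v(t)|$ and to invoke Wirtinger's inequality. The cleanest fix is to avoid the second derivative of $\xi$ altogether: work instead with the difference quotient identity $\xi(t+h) - \xi(t) = \int_t^{t+h}\mathcal F(\xi)\dif s$ and the Lipschitz bound to show $t \mapsto \xi(t+h) - \xi(t)$ — which is $C^1$, $\tau$-periodic, and mean-zero in $t$ — satisfies a Wirtinger estimate, then pass to the limit $h\to 0$ after dividing by $h$; alternatively, one simply notes that $v = \dot\xi$ is Lipschitz (as $\mathcal F\circ\xi$ is), hence absolutely continuous with $\dot v = (\mathcal F\circ\xi)' $ a.e., and that $\mathcal F$ Lipschitz gives $|\dot v| \le L|v|$ a.e.\ by the chain rule for Lipschitz maps — which is all Wirtinger needs. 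Either way the analytic content is the Fourier/Parseval computation, and the sharpness of the constant $2\pi$ comes exactly from the absence of the zeroth Fourier mode, itself a consequence of periodicity of $\xi$. I would present the absolutely-continuous version as the main line and relegate the difference-quotient argument to a remark if a reader worries about the chain rule.
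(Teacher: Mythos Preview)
The paper does not supply its own proof of this theorem: it is quoted from Yorke's 1969 note \cite{Yo69} and used as a black box (the text moves directly from the statement to applications in Corollaries~\ref{co:emanation1} and~\ref{co:emanation2}). So there is no ``paper's proof'' to compare against.

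That said, your argument is correct and is in fact essentially Yorke's original one. The two ingredients---the Wirtinger/Parseval inequality for mean-zero $\tau$-periodic functions and the pointwise a.e.\ bound $|\dot v|\le L|v|$ obtained from the Lipschitz estimate on $\mathcal F$---combine exactly as you describe to yield $\tau\ge 2\pi/L$. Your handling of the regularity issue is also sound: since $v=\mathcal F\circ\xi$ is Lipschitz (as the composition of a Lipschitz map with a $C^1$ map), it is absolutely continuous and differentiable a.e., and the difference-quotient bound $|v(t+h)-v(t)|\le L|\xi(t+h)-\xi(t)|$ passes to the limit to give $|\dot v(t)|\le L|\dot\xi(t)|$ at every point of differentiability of $v$; this is all Wirtinger requires. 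The alternative difference-quotient route you sketch (working with $t\mapsto\xi(t+h)-\xi(t)$ and letting $h\to 0$) is a nice way to sidestep any worry about the chain rule, but it is not strictly necessary.
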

 
 Inequalities on the period like that of Theorem \ref{th:yorke} have been object of study in different contexts, see e.g., the introduction of \cite{BuMa92} for an interesting discussion.
 
 Consider the map $G$ defined in \eqref{eq:vectorFieldG}, and suppose that $g$ and $\varphi$ are Lipschitz continuous. Then, so is $G$. Let $L_G$ be the Lipschitz constant of $G$.
 
{}From Theorem \ref{th:yorke} we immediately deduce the following facts concerning the branch of $T$-forced pairs 
of \eqref{eq:ODEpert},
given by Theorem \ref{thm:mainRamo}:

 \begin{corollary}\label{co:emanation1}
 Let $I$, $\Omega$ and $\Phi$ be as in Theorem \ref{thm:mainRamo}. If the period $T$ of the forcing term $f$ satisfies $T<2\pi/L_G$, then the set $\Gamma$ of Theorem \ref{thm:mainRamo} cannot intersect the slice $\{0\}\times C_T^1(\R)$.
 \end{corollary}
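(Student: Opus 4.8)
The plan is to derive Corollary~\ref{co:emanation1} directly from Theorem~\ref{thm:mainRamo}, the homeomorphism of Lemma~\ref{lemma:corrispRami}, and Yorke's inequality (Theorem~\ref{th:yorke}) applied to the \emph{autonomous} first order system obtained from \eqref{eq:expanded} by setting $\lambda=0$, namely $\dot\xi=G(\xi)$. The key observation is that a nonconstant $T$-periodic solution of this autonomous system would have to satisfy $T\geq 2\pi/L_G$ by Yorke's theorem, so the strict inequality $T<2\pi/L_G$ forces every $T$-periodic solution of $\dot\xi=G(\xi)$ to be constant.

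First I would argue by contradiction: suppose $\Gamma$ meets the slice $\{0\}\times C^1_T(\R)$, i.e.\ there is some $(0,x)\in\Gamma$. By Theorem~\ref{thm:mainRamo}, $\Gamma\subseteq X$ consists of \emph{nontrivial} $T$-forced pairs, so this $(0,x)$, having $\lambda=0$, cannot be trivial, which means $x$ is a nonconstant $T$-periodic solution of \eqref{eq:ODEpert} with $\lambda=0$, that is, of the unperturbed equation \eqref{eq:nonpert}. Next I would transport this to the expanded system via $\mathcal{J}$: by Lemma~\ref{lemma:corrispRami}, $\mathcal{J}(0,x)=(0,\xi)$ is a $T$-pair of \eqref{eq:expanded}, hence (with $\lambda=0$) a $T$-periodic solution $\xi$ of $\dot\xi=G(\xi)$. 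Since $\mathcal{J}$ restricted to $X$ is a homeomorphism onto $Y$ preserving triviality, and $(0,x)$ is nontrivial, $(0,\xi)$ is a nontrivial $T$-pair of \eqref{eq:expanded}; as $\lambda=0$, nontriviality means $\xi$ is nonconstant.

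Then I would invoke Theorem~\ref{th:yorke} with $\mathcal F=G$ (which is Lipschitz of constant $L_G$ by the hypothesis that $g$ and $\varphi$ are Lipschitz), $W=\R^{b+2}$, and $\tau=T$: the nonconstant $T$-periodic solution $\xi$ must satisfy $T\geq 2\pi/L_G$, contradicting the standing assumption $T<2\pi/L_G$. This contradiction shows no such $(0,x)$ exists, i.e.\ $\Gamma\cap(\{0\}\times C^1_T(\R))=\emptyset$, which is the assertion. One should of course note that $T$ is genuinely the minimal period issue does not arise here: Yorke's bound applies to \emph{any} period of a nonconstant periodic solution, so a solution that is $T$-periodic (even if its minimal period divides $T$) still has $T$ among its periods and the inequality $T\geq 2\pi/L_G$ holds verbatim.

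I do not expect a serious obstacle; the only point requiring mild care is the bookkeeping of triviality under $\mathcal{J}$ — ensuring that ``$(0,x)\in\Gamma$ with $\Gamma$ nontrivial'' really does yield a \emph{nonconstant} $\xi$ rather than merely a non-constant $x$ — but this is exactly what Lemma~\ref{lemma:corrispRami} provides, since $\mathcal{J}|_X$ carries trivial $T$-forced pairs bijectively onto trivial $T$-pairs, and a $T$-pair of \eqref{eq:expanded} at $\lambda=0$ is trivial precisely when its solution component is constant. Equivalently, one can see it even more directly: if $\xi=(x,\dot x,y_1,\dots,y_b)$ were constant then in particular $x$ would be constant, contradicting nonconstancy of $x$. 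Hence the argument closes cleanly.

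\begin{proof}
Assume, by contradiction, that $\Gamma$ intersects the slice $\{0\}\times C_T^1(\R)$, and let $(0,x_0)\in\Gamma$. By Theorem \ref{thm:mainRamo}, $\Gamma$ consists of nontrivial $T$-forced pairs of \eqref{eq:ODEpert}; since $\lambda=0$, the nontriviality of $(0,x_0)$ means that $x_0$ is a nonconstant $T$-periodic solution of \eqref{eq:ODEpert} with $\lambda=0$.

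Consider the homeomorphism $\mathcal{J}$ of \eqref{def:omeo} and set $\mathcal{J}(0,x_0)=(0,\xi_0)$, where $\xi_0=(x_0,\dot x_0,y_1,\dots,y_b)$ as in Theorem \ref{thm:equivSol2}. By Lemma \ref{lemma:corrispRami}, $(0,\xi_0)$ is a $T$-pair of \eqref{eq:expanded}; that is, $\xi_0$ is a $T$-periodic solution of the autonomous system $\dot\xi=G(\xi)$. Moreover, since the first component of $\xi_0$ is $x_0$, which is nonconstant, $\xi_0$ is a nonconstant $T$-periodic solution of $\dot\xi=G(\xi)$.

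By assumption $g$ and $\varphi$ are Lipschitz continuous, hence so is $G$, with Lipschitz constant $L_G$. Applying Theorem \ref{th:yorke} to $\mathcal F=G$, with $W=\R^{b+2}$ and $\tau=T$, we deduce that the nonconstant $T$-periodic solution $\xi_0$ satisfies $T\geq 2\pi/L_G$. This contradicts the hypothesis $T<2\pi/L_G$.

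Therefore no such $(0,x_0)$ exists, and $\Gamma$ cannot intersect the slice $\{0\}\times C_T^1(\R)$.
\end{proof}
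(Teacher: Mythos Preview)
Your proof is correct and follows essentially the same approach as the paper: assume a nontrivial $T$-forced pair $(0,x_0)\in\Gamma$, pass to the expanded system via Theorem~\ref{thm:equivSol2} to obtain a $T$-periodic solution of $\dot\xi=G(\xi)$, and invoke Yorke's inequality to reach a contradiction. Your version is more explicit than the paper's (which is a three-line sketch) in verifying that the resulting $\xi_0$ is nonconstant---a point the paper leaves implicit---and in handling the minimal-period issue, but the underlying argument is the same.
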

 
\begin{proof}
 Assume by contradiction that there exists a nontrivial $T$-forced pair
 $(0,x_0)\in\Gamma$. Then $x_0$ corresponds (in the sense of Theorem \ref{thm:equivSol2}) to a $T$-periodic solution of $\dot \xi=G(\xi)$ with $T<2\pi/L_G$,  violating Theorem \ref{th:yorke}. 
\end{proof}
A similar argument yields the following result:
\begin{corollary}\label{co:emanation2}
 Let $I$, $\Omega$ and $\Phi$ be as in Theorem \ref{thm:mainRamo}. 
Let $p\in I$ be an isolated zero of $\Phi$ and assume that $G$ is locally Lipschitz in a neighborhood $W$ of the corresponding zero $P=\big(p,0,\varphi(p,0),\ldots,\varphi(p,0)\big)$ of $G$. Suppose that a set $\Upsilon$ of $T$-pairs for \eqref{eq:expanded} contains $\left(0,\cl{P}\right)$. %this point.
Let $\Lambda=\mathcal{J}^{-1}(\Upsilon)$ be the corresponding set of $T$-forced pairs of \eqref{eq:ODEpert}. As above, call $L_G$ the Lipschitz constant of $G$ in $W$. Then, the trivial $T$-forced pair $(0,\bar p)$ is isolated in the slice $\Lambda_0$. In particular, if $T<2\pi/L_G$, the set $\Lambda$ cannot be contained in the slice $\{0\}\times C_T^1(\R)$.
\end{corollary}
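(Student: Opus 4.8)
The plan is to transport the problem, via the homeomorphism $\mathcal{J}$ of Lemma~\ref{lemma:corrispRami}, to the first order system $\dot\xi=G(\xi)+\lambda F(t,\xi)$ on $\R^{b+2}$, where Yorke's inequality (Theorem~\ref{th:yorke}) can be used to forbid small $T$-periodic orbits around the equilibrium $P$; the bound $T<2\pi/L_G$ is what drives the whole argument, and I keep it in force throughout (the isolation assertion fails without it). First I would record that $P$ is an \emph{isolated} zero of $G$: by Remark~\ref{rem:zero} the map $q\mapsto(q,0,\varphi(q,0),\ldots,\varphi(q,0))$ is a homeomorphism of $\Phi^{-1}(0)$ onto $G^{-1}(0)$, so $p$ isolated in $\Phi^{-1}(0)$ forces $P$ isolated in $G^{-1}(0)$; shrinking $W$ we may therefore assume $P$ is the only zero of $G$ in $W$, keeping $G$ $L_G$-Lipschitz there.

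For the isolation of $(0,\cl p)$ in $\Lambda_0$, note that $\mathcal{J}|_X\colon X\to Y$ preserves $\lambda$, sends $(0,\cl p)$ to $(0,\cl P)$ and carries $\Lambda_0\subseteq X_0$ homeomorphically onto $\Upsilon_0\subseteq Y_0$, the slice $Y_0$ being exactly the set of $T$-periodic solutions of $\dot\xi=G(\xi)$. Hence it is enough to show $\cl P$ is isolated among these solutions in the $C_T(\R^{b+2})$-topology. Arguing by contradiction, take $T$-periodic solutions $\xi_n$ of $\dot\xi=G(\xi)$ with $\xi_n\to\cl P$ in $C_T(\R^{b+2})$ and $\xi_n\neq\cl P$; for $n$ large $\xi_n(\R)\subseteq W$, a constant $\xi_n$ would then be a zero of $G$ in $W$, hence equal to $P$ — excluded — so $\xi_n$ is nonconstant, and Theorem~\ref{th:yorke} applied to $G|_W$ yields $T\geq 2\pi/L_G$, a contradiction. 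Thus $\cl P$, and therefore $(0,\cl p)$, is isolated.

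For the remaining assertion that $\Lambda$ is not contained in $\{0\}\times C_T^1(\R)$, the meaningful situation is the one in which $\Upsilon$ is connected and is not the single pair $(0,\cl P)$ — which is the case for the branches supplied by Theorem~\ref{thm:ramivecchio}, whose closure is non-compact and so not a point. Granting this, $\Lambda=\mathcal{J}^{-1}(\Upsilon)$ is connected and properly contains $(0,\cl p)$; were $\Lambda\subseteq\{0\}\times C_T^1(\R)$ we would have $\Lambda=\{0\}\times\Lambda_0$, and then $(0,\cl p)$, being isolated in $\Lambda_0$ by the previous step, would be an isolated point of the connected set $\Lambda$, forcing $\Lambda=\{(0,\cl p)\}$ and hence $\Upsilon=\{(0,\cl P)\}$ — a contradiction.

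I expect the delicate point to be the isolation step: one must apply Yorke's inequality on the set $W$ where $G$ is genuinely Lipschitz — legitimate because the approximating solutions eventually take values there — and must separately exclude \emph{constant} solutions accumulating at $\cl P$ using that $P$ is isolated in $G^{-1}(0)$. That the hypothesis $T<2\pi/L_G$ is essential is already visible from a linear field whose Jacobian at $P$ has eigenvalues $\pm 2\pi\mathrm{i}/T$: there every nearby orbit in the centre plane is a nonconstant $T$-periodic solution clustering at $\cl P$. The transport through $\mathcal{J}$ and the final connectedness argument are then routine.
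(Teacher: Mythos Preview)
Your approach coincides with the paper's: transport via $\mathcal{J}$ to the first-order system and apply Yorke's inequality (Theorem~\ref{th:yorke}) to $G|_W$ to exclude nonconstant $T$-periodic orbits near $P$, then pull the conclusion back. You are in fact more careful than the paper --- you rightly observe that the isolation claim already needs $T<2\pi/L_G$, and that the final assertion requires $\Upsilon$ to be connected and not a singleton; the paper's terse proof tacitly uses both (and the intended applications, via the branches of Theorem~\ref{thm:ramivecchio}, always supply them).
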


Observe that, unlike $\Gamma$ in Corollary \ref{co:emanation1}, the set $\Upsilon$ in Corollary \ref{co:emanation2} does not necessarily consist of nontrivial $T$-pairs. Consequently, $\Lambda$ does not necessarily consist of nontrivial $T$-forced pairs as well. Notice also that since $p$ is an isolated zero of $\Phi$ then $\left(0,\cl{P}\right)$, %$P=\big(p,0,\varphi(p,0),\ldots,\varphi(p,0)\big)$, 
is isolated in the set of trivial $T$-pairs of \eqref{eq:expanded} and, similarly, $(0,\bar p)$ is isolated in the set of trivial $T$-forced pairs of \eqref{eq:ODEpert}.

\begin{proof}[Proof of Corollary \ref{co:emanation2}]
By construction of the map $\mathcal{J}$, the $T$-forced pairs of \eqref{eq:ODEpert} that lie in the slice $\{0\}\times C_T^1(\R)$ corresponds bijectively to the $T$-pairs of \eqref{eq:expanded} contained in $\{0\}\times C_T(\R^{b+2})$.

Theorem \ref{th:yorke} implies that there are not nonconstant $T$-periodic solutions of \eqref{eq:expanded} contained in $W$. Thus $\Upsilon$ cannot be contained in $\{0\}\times C_T(\R^{b+2})$.  Hence, the same is true for $\Gamma=\mathcal{J}^{-1}(\Upsilon)$.
\end{proof}

\begin{remark}\label{re:emC1}
When $G$ is $C^1$ in a neghborhood of a nondegenerate zero, then it is locally Lipschitz in this neighborhood.
Then by Corollary \ref{co:emanation2} we have that
\emph{if the frequency of the forcing term is sufficiently high, then a nondegenerate zero of the unperturbed vector field is necessarily an ejecting point of nontrivial $T$-forced pairs of \eqref{eq:ODEpert}.}
\end{remark}

\section{Ejecting sets and multiplicity}

This section is devoted to the illustration of sufficient conditions on $f$, $g$, and $\varphi$ yielding the multiplicity of  $T$-periodic solutions of \eqref{eq:ODEpert} for $\lambda> 0$ small.

 The following lemma of \cite{FPS00} about the notion of ejecting set (or point) is crucial to get our multiplicity result. As in the previous section, we reformulate the lemma in our setting. As above,  let $G$ be as in \eqref{eq:vectorFieldG}, and let $X\subseteq [0,\infty )\times C_T^1(\R)$ be the set of nontrivial $T$-forced pairs of \eqref{eq:ODEpert}.

\begin{lemma}[{\protect\cite[Lemma 3.1]{FPS00}}]
\label{le:conn1}
Let $K$ be a relatively open compact subset of the slice $X_0$ of $X$ at $\lambda=0$. Then, for any sufficiently small open neighborhood $U$ of $K$ in $\{0\}\times C_T^1(\R)$, there exists a positive number $\delta$ such that
\[
X\cap\big( [0,\delta ]\times\partial U\big) =\emptyset\;.
\]
\end{lemma}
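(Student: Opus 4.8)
The plan is to deduce the statement from the general result \cite[Lemma 3.1]{FPS00}, but since we have passed to the setting of \eqref{eq:ODEpert} via the linear chain trick, the cleanest route is to transport the question to the first-order system \eqref{eq:expanded} and invoke the homeomorphism $\mathcal J|_X\colon X\to Y$ of Lemma \ref{lemma:corrispRami}. First I would recall that $\mathcal J|_X$ is a homeomorphism carrying the slice $X_0$ onto the slice $Y_0$ of $T$-pairs of \eqref{eq:expanded} at $\lambda=0$, and carrying, for each $\delta\geq 0$, the portion of $X$ over $[0,\delta]$ onto the portion of $Y$ over $[0,\delta]$ (the $\lambda$-coordinate is untouched by $\mathcal J$). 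Consequently the image $\mathcal J(K)$ is a relatively open compact subset of $Y_0$, and a neighborhood basis of $K$ in $\{0\}\times C_T^1(\R)$ is carried by $\mathcal J$ to a neighborhood basis of $\mathcal J(K)$ in $\{0\}\times C_T(\R^{b+2})$; moreover the boundary operator is respected, $\mathcal J(\partial U)=\partial\big(\mathcal J(U)\big)$, because $\mathcal J$ is a homeomorphism onto its image and $U$ may be taken inside that image.

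Next I would apply \cite[Lemma 3.1]{FPS00} directly to the first-order system \eqref{eq:expanded}, which is of the form \eqref{eq:def-tpair} with $k=b+2$, $\g=G$, $\f=F$; the set $Y$ of its $T$-pairs plays the role of the abstract set of pairs there, and $\mathcal J(K)$ the role of $K$. That lemma then yields, for every sufficiently small open neighborhood $V$ of $\mathcal J(K)$ in $\{0\}\times C_T(\R^{b+2})$, a number $\delta>0$ with $Y\cap\big([0,\delta]\times\partial V\big)=\emptyset$. Pulling back through $\mathcal J^{-1}$, and writing $U=\mathcal J^{-1}(V)$ (a neighborhood of $K$), we get $X\cap\big([0,\delta]\times\partial U\big)=\emptyset$, which is the assertion. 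One should note that $\mathcal J$ itself is only a homeomorphism between $X$ and $Y$, not between the ambient spaces $[0,\infty)\times C_T^1(\R)$ and $[0,\infty)\times C_T(\R^{b+2})$; this is harmless because the statement of \cite[Lemma 3.1]{FPS00}, as reproduced here, quantifies only over $\partial U$ and asks for a conclusion about $X\cap\big([0,\delta]\times\partial U\big)$, and the intersections with $X$ (resp. $Y$) are exactly what $\mathcal J$ does match up.

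Alternatively — and this is probably the intended reading — the lemma is simply \emph{stated} as a citation, so no transport is needed: one takes $X$ as given, observes that $X$ is the set of pairs of the parameterized equation \eqref{eq:ODEpert} (equivalently, via Theorems \ref{thm:equivSol2}–\ref{thm:equivSol1}, of a genuine first-order equation), checks that $K$ is relatively open and compact in $X_0$, and then the conclusion is \emph{verbatim} \cite[Lemma 3.1]{FPS00} with the roles $\g\leftrightarrow G$, $K\leftrightarrow K$. In that case the ``proof'' is a two-line remark that the hypotheses of \cite[Lemma 3.1]{FPS00} are met in our setting, with the linear chain correspondence of Section \ref{sect:lct} ensuring that the abstract framework of \cite{FPS00} applies. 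The only point requiring any care — and the mild obstacle in the transport version — is verifying that $\mathcal J$ respects the boundary operation on sufficiently small neighborhoods, i.e. that for $U$ small enough one has $\mathcal J(\partial_{\{0\}\times C_T^1(\R)}U)=\partial_{\{0\}\times C_T(\R^{b+2})}\mathcal J(U)$ inside the image $\mathcal J\big(\{0\}\times C_T^1(\R)\big)$; since $\mathcal J$ restricted to the slice is a homeomorphism onto a (not necessarily open) subset, one shrinks $U$ so that $\partial U\subseteq X_0$ near $K$, where $\mathcal J$ and $\mathcal J^{-1}$ are both continuous, and the identification goes through. I expect the referee-level content here to be entirely bookkeeping; the substantive input is the already-proven correspondence of Section \ref{sect:lct} together with \cite[Lemma 3.1]{FPS00}.
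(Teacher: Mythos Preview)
The paper does not prove this lemma at all: it is cited from \cite{FPS00} and merely ``reformulated in our setting'' (the sentence immediately preceding the lemma says exactly this). Your second reading---that the statement is a direct citation and the ``proof'' is just the observation that the hypotheses of \cite[Lemma~3.1]{FPS00} are met---is precisely what the paper does.

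Your first, transport-via-$\mathcal J$ argument goes beyond anything the paper attempts. The idea is reasonable, and you are right to flag the boundary issue (since $\mathcal J$ is only a homeomorphism $X\to Y$, not between the ambient Banach spaces, $\mathcal J(\partial U)$ need not equal $\partial(\mathcal J(U))$ in $C_T(\R^{b+2})$). However, your proposed fix---``shrink $U$ so that $\partial U\subseteq X_0$''---does not work: $X_0$ is typically nowhere dense in $C_T^1(\R)$, so no nontrivial open $U$ can have its boundary contained in $X_0$. A cleaner justification of the reformulation is to note that the proof of \cite[Lemma~3.1]{FPS00} is a purely metric/compactness argument (local compactness of the set of $T$-pairs via Ascoli--Arzel\`a, plus $K$ compact and relatively open in the slice), and these ingredients hold verbatim for $X\subseteq[0,\infty)\times C_T^1(\R)$ without passing through $\mathcal J$. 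But again, the paper does not spell any of this out.
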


 Corollary \ref{co:emanation2}, Theorem \ref{thm:mainRamo} and Lemma \ref{le:conn1} yield the following result about multiple periodic solutions of \eqref{eq:ODEpert}.
 
 \begin{theorem}\label{thm:mult0}
  Let $f$, $g$, and $\varphi$ be as in \eqref{eq:ODEpert}. Let $I\subseteq\R$ be an open interval and assume that $\Phi$ changes sign at the isolated zeros $p_1,\ldots, p_n\in I$. For $i=1,\ldots,n$, put $P_i=\big(p_i,0,\varphi(p_i,0)\ldots,\varphi(p_i,0)\big)$.  Assume that $g$ and $\varphi$ are such that $G$ is Lipschitz with constant $L$ on a neighborhood of $\{P_1,\ldots,P_n\}$ and the period $T$ of $f$ satisfies $T<2\pi/L$. Then:
  \begin{enumerate}
   \item\label{pr:mult0-1} For $i=1,\ldots,n$, there exist connected sets $\Gamma_i$ of nontrivial $T$-pairs of \eqref{eq:ODEpert} emanating from $(0,\bar p_i)$;
   \item\label{pr:mult0-2} There exists $\lambda _{*}>0$ such that the projection of each $\Gamma_i$ on the first component contains $[0,\lambda_*]$. Consequently, \eqref{eq:ODEpert} has at least $n$ solutions $x_1^{\lambda},\ldots,x_n^{\lambda}$ of period $T$ for $\lambda\in[0,\lambda_*)$;
   \item\label{pr:mult0-3} The images of $x_1^{\lambda},\ldots,x_n^{\lambda}$ are pairwise disjoint.
  \end{enumerate}
\end{theorem}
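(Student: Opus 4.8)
The proof assembles three previously established ingredients: Theorem~\ref{thm:mainRamo} (which produces, for each sign-change of $\Phi$, an unbounded branch of nontrivial $T$-forced pairs emanating from the corresponding trivial pair), Corollary~\ref{co:emanation2} together with Remark~\ref{re:emC1} (which, under the hypothesis $T<2\pi/L$, forces each such branch to leave the slice $\{0\}\times C_T^1(\R)$, so that each $p_i$ is genuinely an ejecting point), and Lemma~\ref{le:conn1} (which, applied to suitable singleton sets, yields a uniform $\delta$ below which no $T$-forced pair can escape through the boundary of small neighbourhoods of the ejecting points). The disjointness statement~\eqref{pr:mult0-3} will follow by choosing the neighbourhoods of the $P_i$ pairwise disjoint and tracking this disjointness back through the homeomorphism $\mathcal J$ of Lemma~\ref{lemma:corrispRami}.

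First I would prove~\eqref{pr:mult0-1}. For each $i$, since $p_i$ is an isolated zero of $\Phi$ at which $\Phi$ changes sign, one can pick a bounded open interval $(\alpha_i,\beta_i)\ni p_i$ containing no other zero of $\Phi$ and with $\Phi(\alpha_i)\cdot\Phi(\beta_i)<0$; these intervals can be chosen pairwise disjoint since the $p_i$ are isolated. Take $\Omega_i\subseteq[0,\infty)\times C_T^1(\R)$ open with $\widetilde{\Omega_i}=(\alpha_i,\beta_i)$, chosen so that the $\Omega_i$ are pairwise disjoint (e.g.\ by intersecting with a tube around the constant functions with values in $(\alpha_i,\beta_i)$). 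Theorem~\ref{thm:mainRamo} then gives a connected set $\Gamma_i$ of nontrivial $T$-forced pairs in $X\cap\Omega_i$, with non-compact closure relative to $\Omega_i$, meeting the set~\eqref{eq:zeroinomega}, which here reduces to $\{(0,\bar p_i)\}$ because $(\alpha_i,\beta_i)\cap\Phi^{-1}(0)=\{p_i\}$. This is~\eqref{pr:mult0-1}. Note that $G$ being Lipschitz near $P_i$, hence locally Lipschitz there, and $p_i$ being isolated, Corollary~\ref{co:emanation2} applies with $\Upsilon$ taken to be $\mathcal J(\overline{\Gamma_i})\cup\{(0,\cl{P_i})\}$: since $T<2\pi/L$, the corollary shows $(0,\bar p_i)$ is isolated in $(\Gamma_i\cup\{(0,\bar p_i)\})_0$ and that this set is not contained in $\{0\}\times C_T^1(\R)$, i.e.\ $p_i$ is an ejecting point.

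Next, for~\eqref{pr:mult0-2}, apply Lemma~\ref{le:conn1} with $K=K_i:=\{(0,\bar p_i)\}$, which is a relatively open compact subset of the slice $X_0$ precisely because $p_i$ is an isolated zero of $\Phi$ (so $(0,\bar p_i)$ is isolated among trivial pairs, and by the ejecting-point property it is isolated in $X_0$ as well). The lemma furnishes, for each $i$, a small open neighbourhood $U_i$ of $(0,\bar p_i)$ in $\{0\}\times C_T^1(\R)$ and a $\delta_i>0$ with $X\cap([0,\delta_i]\times\partial U_i)=\emptyset$; shrinking the $U_i$, we may take them pairwise disjoint, contained in the respective $\Omega_i$, and with $\overline{U_i}$ meeting no trivial $T$-forced pair other than $(0,\bar p_i)$. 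Set $\lambda_*:=\min_i\delta_i$. The standard ejecting-set argument now applies to each $\Gamma_i$: the connected set $\Gamma_i\cup\{(0,\bar p_i)\}$ meets $U_i$ (at $(0,\bar p_i)$) and, having non-compact closure in $\Omega_i$ while being confined by the condition $X\cap([0,\delta_i]\times\partial U_i)=\emptyset$, cannot remain inside the bounded-in-$\lambda$ cylinder $[0,\lambda_*]\times U_i$; by connectedness it must therefore project onto all of $[0,\lambda_*]$. Hence for every $\lambda\in[0,\lambda_*)$ there is $x_i^\lambda\in(\Gamma_i)_\lambda$, a $T$-periodic solution of~\eqref{eq:ODEpert}; this gives~\eqref{pr:mult0-2}.

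Finally,~\eqref{pr:mult0-3}: the solutions $x_i^\lambda$ are distinct because $(\lambda,x_i^\lambda)\in\Omega_i$ and the $\Omega_i$ are pairwise disjoint, but one wants the stronger conclusion that their \emph{images} in $\R$ are pairwise disjoint. For this I would use that $x_i^\lambda$ is, via $\mathcal J$, carried to a $T$-periodic solution $\xi_i^\lambda$ of~\eqref{eq:expanded} lying (for $\lambda$ small, after possibly shrinking $\lambda_*$) in a small neighbourhood $W_i$ of $P_i$; choosing the $W_i$ pairwise disjoint and with pairwise disjoint projections onto the first ($u$-)coordinate, the $u$-component of $\xi_i^\lambda$ — which is exactly $x_i^\lambda$ — takes values in the $i$-th such projection, and these are pairwise disjoint intervals in $\R$. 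The one delicate point, and the step I expect to require the most care, is ensuring a \emph{uniform} $\lambda_*$ that simultaneously keeps $\xi_i^\lambda$ inside $W_i$ for all $i$: this is where Lemma~\ref{le:conn1} is essential, since without the uniform barrier $\delta$ the branches could in principle wander out of the $W_i$ at different small rates. Once the barriers are in place, continuity of $\mathcal J^{-1}$ and a compactness argument on $[0,\lambda_*]$ close the gap, and the disjointness of images follows.
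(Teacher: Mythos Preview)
Your argument for \eqref{pr:mult0-1} and \eqref{pr:mult0-2} is essentially the paper's: pairwise disjoint isolating intervals $I_i$, Theorem~\ref{thm:mainRamo} for the branches, Corollary~\ref{co:emanation2} for isolation of $(0,\bar p_i)$ in $X_0$, Lemma~\ref{le:conn1} for the barriers, and $\lambda_*=\min_i\delta_i$. (The paper invokes Ascoli--Arzel\`a explicitly for the compactness step behind the claim that the projection of $\Gamma_i$ covers $[0,\delta_i]$; you leave this implicit but it is the same argument.)

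For \eqref{pr:mult0-3} you take an unnecessary detour. The paper simply chooses $U_i = C_T^1(I_i)$, the open set of $T$-periodic $C^1$ functions with values in $I_i$. Since the barrier from Lemma~\ref{le:conn1} keeps $(\lambda,x_i^\lambda)$ inside $\{\lambda\}\times U_i$ for $\lambda<\lambda_*$, membership in $U_i$ already means $x_i^\lambda(\R)\subset I_i$, and the $I_i$ are pairwise disjoint---end of proof. Your route through $\mathcal{J}$ and neighbourhoods $W_i\subset\R^{b+2}$ of the $P_i$ can be made to work, but the uniform containment ``$\xi_i^\lambda\in W_i$ for all $\lambda\le\lambda_*$'' that you correctly flag as the delicate step is exactly what the direct choice of $U_i$ delivers in one line without ever leaving $C_T^1(\R)$. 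In fact your own choice of $\Omega_i$ as a ``tube'' around constants in $(\alpha_i,\beta_i)$ already carries this information: with small enough $C^0$-radius, functions in $\Omega_i$ have images in pairwise disjoint enlargements of $(\alpha_i,\beta_i)$, and you would be done without invoking $\mathcal{J}$ at all.
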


\begin{proof}
 Assertion \eqref{pr:mult0-1} follows immediately from Theorem \ref{thm:mainRamo} applied to pairwise disjoint open subintervals $I_i$, $i=1,\ldots,n$, isolating the zero $p_i$.
 
 To prove assertion \eqref{pr:mult0-2} let, for  $i=1,\ldots,n$, $U_i= C_T^1(I_i)$ and $K_i=(0,\bar p_i)$. By Corollary \ref{co:emanation2}, $K_i$ is isolated, hence relatively open and compact (being a singleton). Then, Lemma \ref{le:conn1} yields positive $\delta_i$'s such that$X\cap\big([0,\delta]\times\partial U_i\big)=\emptyset$.
 
 Recall now that $X$ is locally complete. Since the closure of $\Gamma_i$ in $X$ meets $(0,\bar p_i)$ and is not contained in any compact subset of $X$, it is not difficult using Ascoli-Arzel\`a theorem, to prove that that the projection of $\Gamma_i$ on its first component is $[0,\delta_i]$. The proof is completed by taking $\lambda_*=\min\{\delta_1,\ldots,\delta_n\}$. 
 
 To prove the last assertion, observe that by \eqref{pr:mult0-2} there exists $\lambda_*>0$ such that each $\{\lambda\}\times U_i$, contains at least one $T$-forced pair, say $(\lambda,x_i^\lambda)$ for any  $\lambda\in[0,\lambda_*)$. Hence, for $j,k=1,\ldots,n$ and $j\neq k$, the images of $x_j^\lambda$ and $x_k^\lambda$ are confined to the disjoint sets $I_i$ and $I_k$.
 \end{proof}
 
 Restricting to a neighborhood of the set of zeros of $G$ corresponding to $p_1,\ldots, p_{n}$, we can give a somewhat less technical and perhaps more elegant formulation of our multiplicity result.
 
 \begin{corollary}
  Let $f$, $g$, and $\varphi$ be as in \eqref{eq:ODEpert}, with $g$ and $\varphi$ locally Lipschitz. Assume that $\Phi$ changes sign at the isolated zeros $p_1,\ldots, p_{n}\in I$. Then, for sufficiently high frequency of the perturbing term $f$ and sufficiently small $\lambda>0$,  equation \eqref{eq:ODEpert} has at least $n$ solutions of period $T$ whose images are pairwise disjoint.
 \end{corollary}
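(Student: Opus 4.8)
The plan is to reduce the statement directly to Theorem~\ref{thm:mult0}, the only point being to manufacture, out of the local Lipschitz hypothesis, the quantitative data (a Lipschitz constant $L$ together with the period bound $T<2\pi/L$) that Theorem~\ref{thm:mult0} requires.

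First I would fix pairwise disjoint open subintervals $I_1,\ldots,I_n\subseteq I$, with $p_i\in I_i$, each containing no zero of $\Phi$ other than $p_i$. Next, for each $i$, pick a bounded open ball $B_i\subseteq\R^{b+2}$ centred at $P_i=\big(p_i,0,\varphi(p_i,0),\ldots,\varphi(p_i,0)\big)$, small enough that the $B_i$ have pairwise disjoint closures and that the projection of $\overline{B_i}$ onto the first coordinate lies in $I_i$. Since $g$ and $\varphi$ are locally Lipschitz, the map $G$ of \eqref{eq:vectorFieldG} is locally Lipschitz, hence Lipschitz on the compact set $\bigcup_{i=1}^n\overline{B_i}$ with some constant $L>0$; note that $L$ depends only on $g$, $\varphi$, $a$, $b$ and the chosen neighbourhoods, and in particular not on $f$ or on $T$. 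Set $W:=\bigcup_{i=1}^n B_i$, an open neighbourhood of $\{P_1,\ldots,P_n\}$ on which $G$ is $L$-Lipschitz.

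Now I would interpret ``sufficiently high frequency of $f$'' as ``$T$ sufficiently small'', and in particular as $T<2\pi/L$, with $L$ the constant just fixed. With this choice all hypotheses of Theorem~\ref{thm:mult0} hold: $\Phi$ changes sign at the isolated zeros $p_1,\ldots,p_n\in I$, $G$ is Lipschitz with constant $L$ on the neighbourhood $W$ of $\{P_1,\ldots,P_n\}$, and $T<2\pi/L$. Theorem~\ref{thm:mult0} then produces $\lambda_*>0$ and, for every $\lambda\in[0,\lambda_*)$, solutions $x_1^\lambda,\ldots,x_n^\lambda$ of \eqref{eq:ODEpert} of period $T$ with pairwise disjoint images (the image of $x_i^\lambda$ lying in $I_i$). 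Restricting to $\lambda\in(0,\lambda_*)$ gives the assertion.

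I do not expect a real obstacle: the corollary is just a qualitative repackaging of Theorem~\ref{thm:mult0}. The one place needing mild care is the order of the quantifiers --- the Lipschitz constant $L$ must be extracted from a compact neighbourhood of $\{P_1,\ldots,P_n\}$ chosen \emph{before} the frequency is increased, so that the vague phrase ``sufficiently high frequency'' can legitimately be read as the single inequality $T<2\pi/L$ with $L$ already determined. Everything else is a direct citation of the results of the preceding section.
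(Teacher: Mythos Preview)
Your proposal is correct and is precisely the argument the paper has in mind: the corollary is presented there without a formal proof, merely as the result of ``restricting to a neighborhood of the set of zeros of $G$ corresponding to $p_1,\ldots,p_n$'' in order to invoke Theorem~\ref{thm:mult0}. Your write-up simply makes explicit the extraction of the Lipschitz constant $L$ from a compact neighbourhood (so that ``sufficiently high frequency'' becomes $T<2\pi/L$), which is exactly the step the paper leaves to the reader.
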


\section{Visual representation of branches}
 We briefly discuss a method allowing to represent graphically the infinite dimensional set of $T$-forced pairs of \eqref{eq:ODEpert}. In other words, as in \cite{BiSp15}, we create a homeomorphic finite dimensional image of the set $\Gamma$ yielded by Theorem \ref{thm:mainRamo} and show a graph of some relevant functions of the point of $\Gamma$ as, for instance, the sup-norm or the diameter of the orbit of the solution $x$ in any $T$-forced pair $(\lambda,x)$.
 
 In this section we assume $g$ and $\varphi$ as well as the perturbing term $f$ to be at least Lipschitz continuous, so that continuous dependence on initial data of \eqref{eq:expanded} holds.
 
 Let us consider the set
\[
 S=\left\{ (\lambda,q,p_0,\ldots,p_b)\in[0,\infty)\times\R^{b+2}\;\;\left|\;\parbox{0.42\linewidth}{\small $(q,p_0,\ldots,p_b)$ is an initial condition at $t=0$ for a $T$-periodic solution of \eqref{eq:expanded}}\right.\right\}
\]
The elements of $S$ are called \emph{starting points} for \eqref{eq:expanded}. A starting point $(\lambda,q,p_0,\ldots,p_b)$ is \emph{trivial} when $\lambda=0$ and the solution of \eqref{eq:expanded} starting a time $t=0$ from  $(q,p_0,\ldots,p_b)$ is constant.
 
By uniqueness and continuous dependence on initial data the map $\mathfrak{p}\colon Y\to S$ given by
\[
(\lambda,x_0,y_0,\ldots.y_b)\mapsto\big(\lambda,x_0(0),y_0(0),\ldots.y_b(0)\big)
\]
is a homeomorphism that establishes a correspondence between trivial $T$-pairs and trivial starting points. Thus, the composition $\mathfrak{h}=\mathfrak{p}\circ\mathcal{J}^{-1}\colon X\to S$ is as well a homeomorphism that establishes a correspondence between trivial $T$-forced pairs and trivial starting points.  In other words,  $\Sigma:=\mathfrak{h}(\Gamma)\subseteq [0,\infty)\times\R^{b+2}$ is the desired homemorphic image of $\Gamma$.
 
\begin{figure}[ht!]
 \begin{tabular}{@{}cc@{}}
  \subfigure[sup-norm of $x$ for all $(\lambda,x)\in X$]{\includegraphics[width=0.48\linewidth]{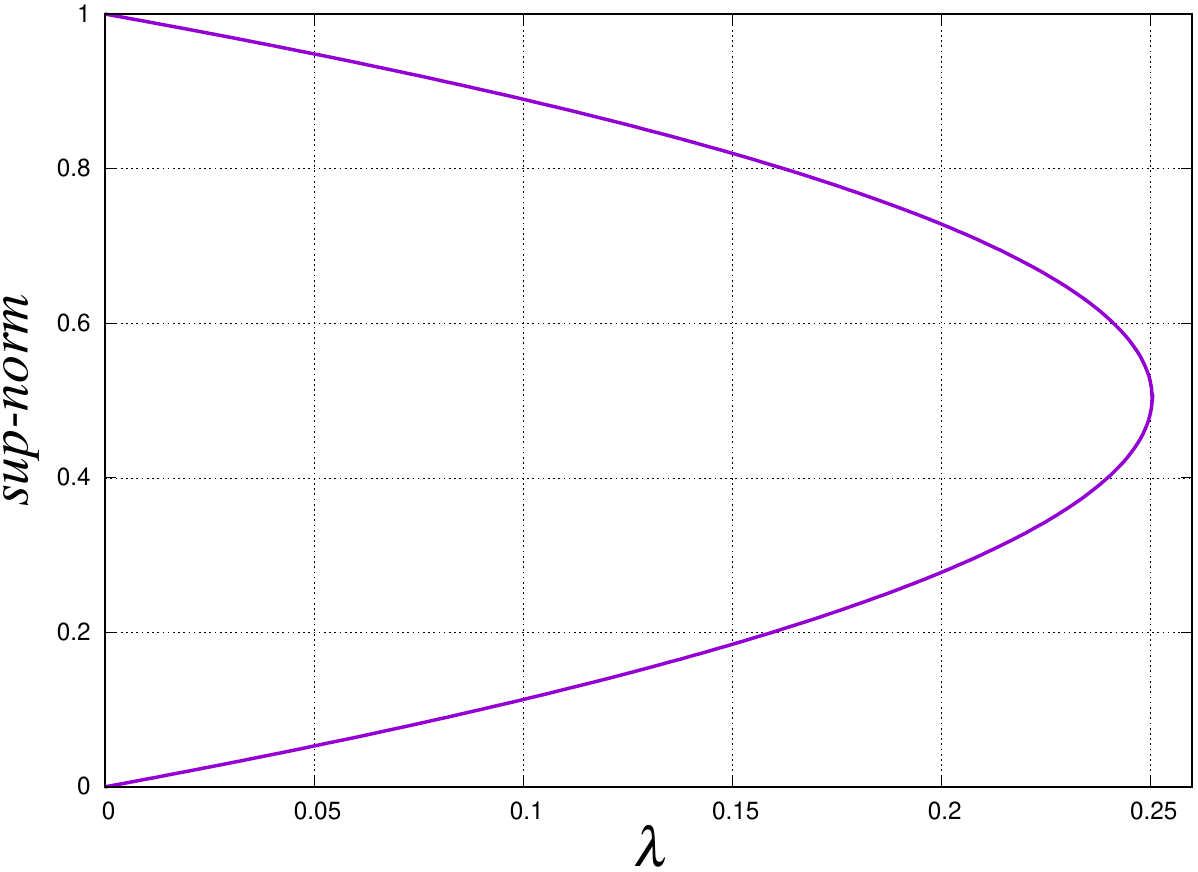}} &
  \subfigure[diameter of the orbits of $x$ for all $(\lambda,x)\in X$ (i.e., $\max_{t\in [0,T]} x(t)- \min_{t\in [0,T]} x(t)$)]{\includegraphics[width=0.48\linewidth]{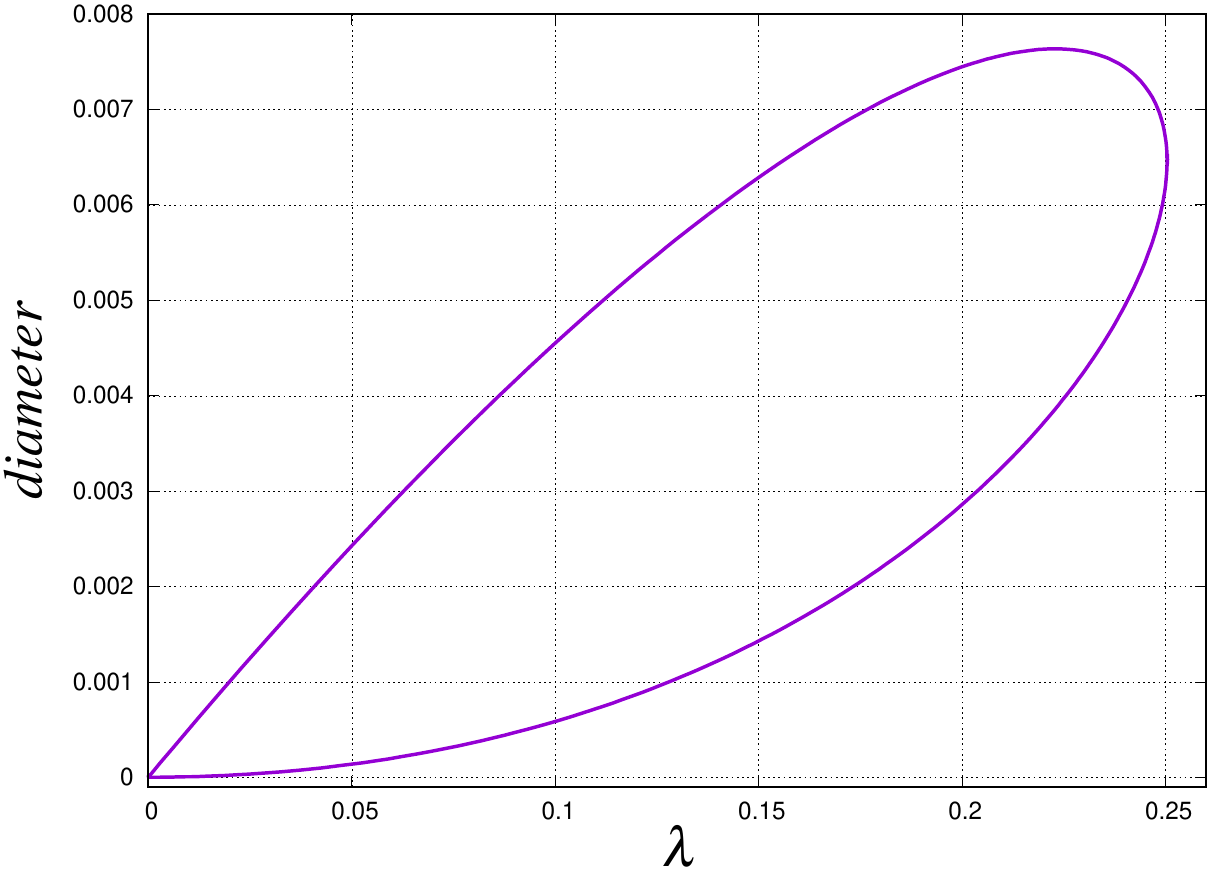}}
 \end{tabular}
\caption{Sup-norm and diameter of points in $\Gamma$}\label{fig:1}
\end{figure}
\begin{example}
Consider the following equation:
\begin{equation}\label{eq:example}
 \ddot x(t) = -x(t)\left(1+\int_{-\infty}^t\gamma_2^2(t-s)\big(\dot x(s)-x(s)\big)\dif{s}\right)+\lambda\big(1+x(t)\sin(2\pi t)\big),
\end{equation}
with $\lambda\geq0$. Here, $T=1$, $g(\xi,\eta,\zeta)=-\xi(1+\zeta)$, $\varphi(p,q)=q-p$, so that
\[
\Phi(u)=g\big(u,0,\varphi(u,0)\big)=u(1-u),
\]
and
\[
 G(u,v_0,v_1,v_2)=\big(v_0,-u(1+v_2),2(v_0-u-v_1),2(v_1-v_2)\big).
\]
Clearly, $\Phi$ changes sign at the zeros $0$ and $1$, the corresponding zeros of $G$ being $P_0:=(0,0,0,0)$ and $P_1:=(1,0,-1,-1)$. Observe that $G$, in suitably small neighborhoods of $P_0$ and $P_1$, is Lipschitz with constant smaller than $2$. Thus, by Corollary \ref{co:emanation2}, $(0,\cl 0)$ and $(0,\cl 1)$ are ejecting for the set $X$ of $T$-forced pairs of \eqref{eq:example}. Let
\begin{equation*}
\Omega_0=[0,\infty)\times C_T^1(\R)\setminus\{(0,\cl 1)\},\qquad
\Omega_1=[0,\infty)\times C_T^1(\R)\setminus\{(0,\cl 0)\}.
\end{equation*}
Theorem \ref{thm:mainRamo} implies the existence of connected sets $\Gamma_0$ and $\Gamma_1$ of nontrivial $T$-forced pairs of \eqref{eq:example} whose closures, respectively, emanate from $(0,\cl 0)$ and $(0,\cl 1)$ and are not contained in any compact subset of $\Omega_0$ and $\Omega_1$. Figure \ref{fig:1} shows sup-norm and diameter of the solutions of the $T$-forced pairs of \eqref{eq:example}. Figure \ref{fig:2}, instead shows the projections of $\Sigma$ on the plane $(\lambda,q)$ and on the 3-dimensional space $(q,p_0,\lambda)$. Indeed, Figures \ref{fig:1} and \ref{fig:2} suggest that  $\Gamma_1=\Gamma_0$. Observe also that, by Theorem \ref{thm:mult0} there are two periodic solutions of \eqref{eq:example} for small $\lambda>0$. The figures suggest that the value of $\lambda_*$ in Theorem \ref{thm:mult0} is about $0.25$.
\end{example}

\begin{figure}[h!]
 \begin{tabular}{@{}cc@{}}
  \subfigure[projection of $X$ on the plane $(\lambda,q)$]{\includegraphics[width=0.48\linewidth]{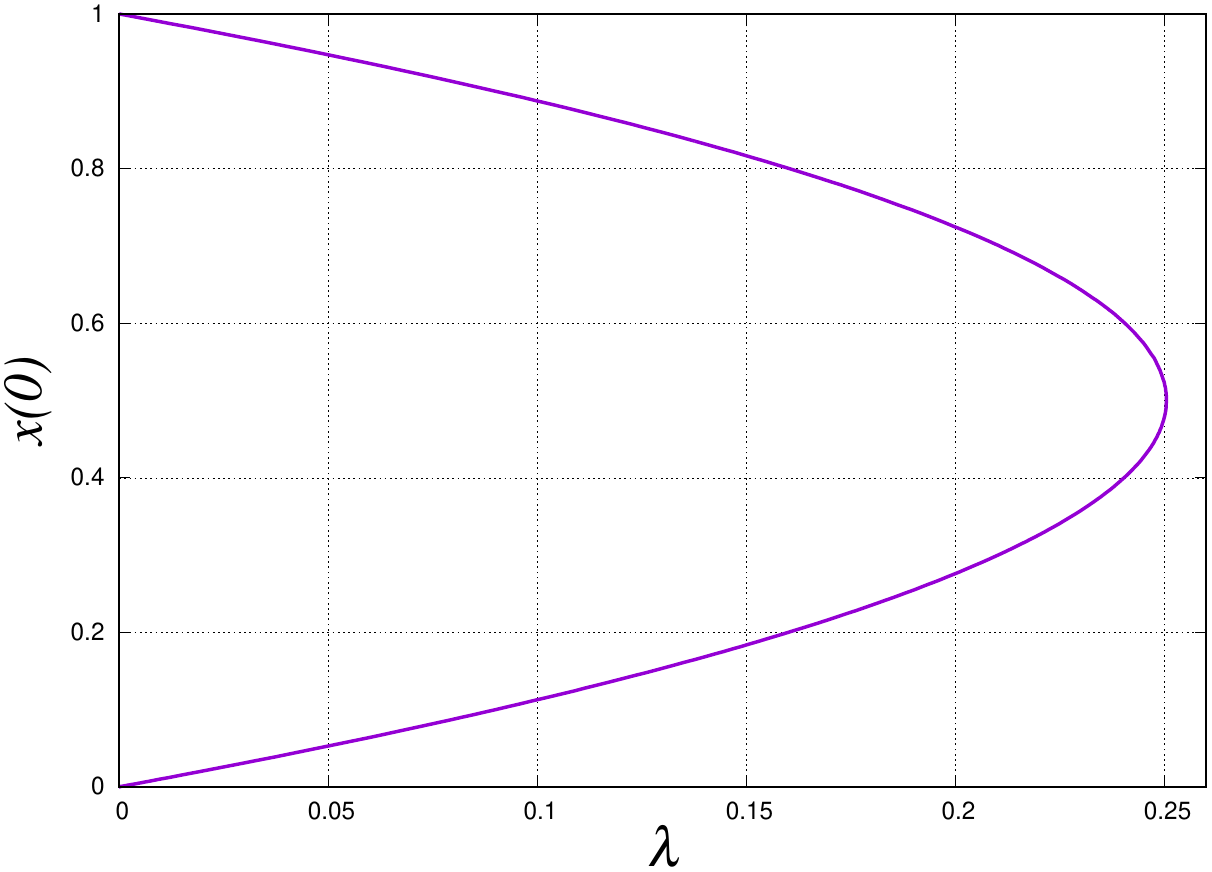}} &
  \subfigure[projection of $X$ on the space $(q,p_0,\lambda)$]{\includegraphics[width=0.48\linewidth]{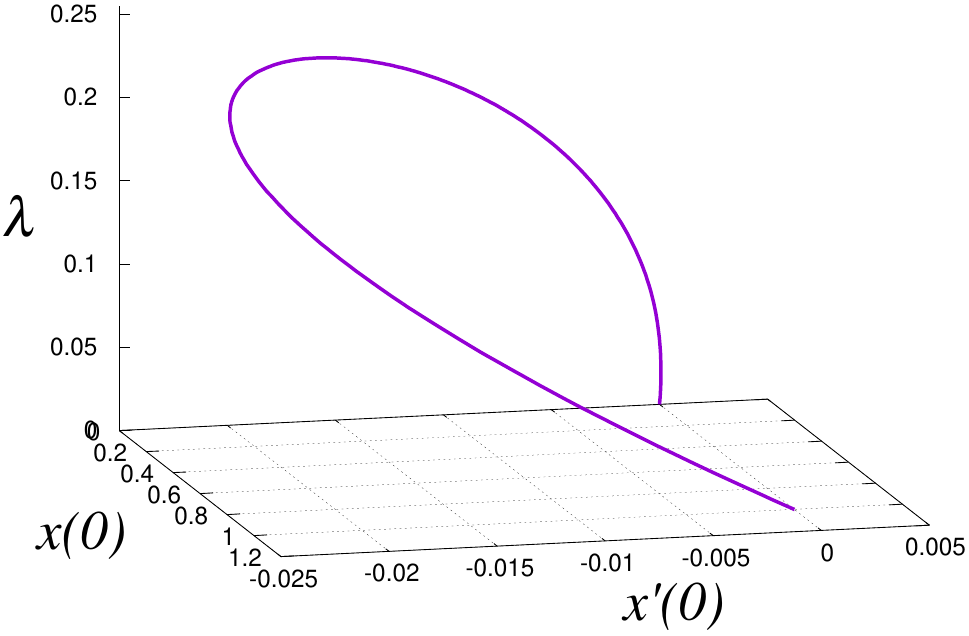}}
 \end{tabular}
\caption{Inital values and speed of $(\lambda,x)\in \Gamma$}\label{fig:2}
\end{figure}

\end{document}